\newcommand{\dist}{{\rm dist}}
\newcommand{\spt}{{\rm spt}}
\DeclareMathOperator{\restrict}{\llcorner}
\DeclareMathOperator{\Tan}{Tan}
\DeclareMathOperator{\reach}{reach}
\DeclareMathOperator{\nor}{nor}
\newcommand{\Der}{\mathrm{D}}
\theoremstyle{plain}
\newtheorem{theorem}{Theorem}[section]
\newtheorem{lemma}[theorem]{Lemma}
\newtheorem{corollary}[theorem]{Corollary}
\newtheorem*{theorem*}{Theorem}
\newtheorem*{corollary*}{Corollary}
\theoremstyle{definition}
\newtheorem{definition}{Definition}
\newtheorem{remark}{Remark}[section]
\newtheorem*{notation*}{Notation}
\numberwithin{equation}{section}
\numberwithin{figure}{section}
\title{Finite total curvature and soap bubbles with almost constant higher-order mean curvature}
\author{Mario Santilli}
\address{Dipartimento di Ingegneria e Scienze dell'Informazione e Matematica, Universit\'a degli Studi dell'Aquila, 67100 L'Aquila, Italy}
\email{mario.santilli@univaq.it}
\begin{document}

\begin{abstract}
Given $ n \geq 2 $ and $ k \in \{2, \ldots , n\} $, we study the asymptotic behaviour of sequences of bounded $C^2$-domains of finite total curvature in $ \mathbb{R}^{n+1} $ converging in volume and perimeter, and with the  $ k $-th mean curvature functions converging in $ L^1 $ to a constant. Under natural  mean convexity hypothesis, and assuming an $ L^\infty $-control on the mean curvature outside a set of vanishing area, we prove that finite unions of mutually tangent balls are the only possible limits. This is the first result where such a uniqueness is proved without assuming uniform bounds on the exterior or interior touching balls.
\end{abstract}

\maketitle
\tableofcontents

\section{Introduction}

\subsection{Overview}
If $ \Omega \subseteq \mathbb{R}^{n+1} $ is an open set whose boundary $ \partial \Omega $ is a closed embedded $ C^2 $-hypersurface and if $ \kappa_{\Omega,1} \leq \ldots  \leq \kappa_{\Omega, n} $ are the principal curvatures of $ \partial \Omega $ with respect to the exterior normal of $ \Omega $, then  \emph{the $ k $-th mean curvature function of $ \Omega $} is given by 
$$ H_{\Omega, k} = \sum_{\lambda \in \Lambda(n,k)} \kappa_{\Omega, \lambda(1)}\cdots \kappa_{\Omega, \lambda(k)}, $$
where $ \Lambda(n,k) $ is the set of all increasing maps from $\{1, \ldots , k\} $ to $ \{1, \ldots , n\} $. The function $ H_{\Omega, 1} $ is also called mean curvature of $ \Omega $. 

The following theorem is a classical and well known result.

\medskip 
\noindent {\bf Soap bubble theorem.}	{\it If $ k \in \{1, \ldots, n\} $ and $ \Omega \subseteq \mathbb{R}^{n+1} $ is a bounded and connected  open set with $ C^2 $-boundary such that $ H_{\Omega, k} $ is constant, then $ \Omega $ is a round ball.}

\medskip 

\noindent For $ k = 1 $  this result is due to Alexandrov and was proved using his celebrated moving plane method; see \cite{Aleksandrov}. For arbitrary $ k $ the result was proved in \cite{RosRevista} and \cite{MontielRos}, using an approach based on an optimal geometric inequality inspired by the work of Heintze and Karcher in \cite{HeintzeKarcher}, and on the Minkowski identities \cite{Hsiung}. A proof based on the moving plane method for arbitrary $ k $ is due to Korevaar, see \cite[Appendix]{KorevaarRos}.

Motivated by the soap bubble theorem, we are interested in the following uniqueness problem
\begin{equation*}(UP_k)
	\begin{split}
		&\mbox{{\it Does every sequence $ \Omega_\ell \subseteq \mathbb{R}^{n+1} $ of bounded, connected, open}}
		\\
		&\mbox{{\it smooth sets with bounded perimeters, whose $ k $-th mean curvature}}
		\\
		&\mbox{{\it  functions converge to a constant, have finite unions of}}
		\\
		&\mbox{{\it   mutually tangent balls as their only possible limits?}}
	\end{split}
\end{equation*}
\noindent In general it is not possible to deduce convergence to a single ball from the sole hypothesis of small oscillation of the mean curvature functions. In fact, by truncating and smoothly completing unduloids with thin necks  one can  construct a sequence of bounded and connected smooth boundaries converging to an array of mutually tangent balls while the mean curvatures converge to a constant.

The problem $(UP_1)$ was  thoroughly investigated, even in quantitative ways. In \cite{MaggiDelgadino}, Delgadino and Maggi extended the Alexandrov theorem proving that   if a set of finite perimeter has constant distributional mean curvature, then it is a finite union of closed balls with disjointed interiors.  This result is proved using a measure-theoretic generalization of the Montiel-Ros argument, and implies the following uniqueness result (see \cite[Corollary 2]{MaggiDelgadino}): \emph{finite unions of mutually tangent balls are the only possible limits of sequences of  sets of finite perimeter converging in volume and in  perimeter, and whose distributional mean curvatures converge to a constant.}  Quantitative rates of convergence towards finite unions of balls are obtained in \cite{CiraoloMaggi},  \cite{MaggiArma2018} and in \cite{JulinNinikoski}, employing integral-geometric methods inspired by the Montiel-Ros argument. See also \cite{DeRosaetall} and \cite{MaggiSantilli} for extensions of \cite{MaggiDelgadino} to the anisotropic and Riemannian setting.

The Heintze-Karcher inequality was generalized in the setting of arbitrary closed sets in \cite[Theorem 3.20]{HugSantilli}. This result opens the way to obtain a measure-theoretic version of the soap bubble theorem for sets of positive reach in terms of their curvature measures;  see  \cite[Theorem A, Theorem 6.15]{HugSantilli} (see also Theorem \ref{soap bubble th positive reach} for an extension). As a corollary (see Theorem \ref{positive reach}) we obtain the following answer to $(UP_k)$ for arbitrary $ k $: \emph{one single ball is the only possible limit in the sense of Hausdorff converge, if one assumes that the sets $ \Omega_\ell $ in $(UP_k) $ satisfies a uniform bound on the outer touching balls (i.e.\ lower uniform bound on the reach) and their $ (k-1) $-th mean curvature functions become asymptotically non-negative.} Optimal quantitative rates of convergence in Hausdorff distance towards one single balls can be deduced from the results in \cite{CiraoloVezzoni} (for $ k = 1 $) and  \cite{CiraoloRoncoroniVezzoni} (for $ 1 \leq k \leq n $), assuming a uniform bound both on the interior and exterior touching balls. The results in \cite{CiraoloVezzoni} and \cite{CiraoloRoncoroniVezzoni} are based on a quantitative version of the Alexandrov moving plane method. See also \cite{MagnaniniPoggesi} for other related quantitative results.

\subsection{The main theorem}
The uniqueness problem $(UP_k) $, for $ k \geq 2 $ and without assuming uniform bounds on the interior or exterior touching balls, is a natural and interesting problem, which is to author's knowledge completely open, even in the $ 3 $-dimensional Euclidean space for sequences with vanishing oscillation of the Gaussian curvature (in some $ L^p $-norm). 
In this paper we study this problem for sequences of \emph{finite total curvature}, see Definition \ref{def finite total curvature}. As expected,  studying this problem under this new hypothesis requires the introduction of a substantial novel method of proof, with respect to the approaches used in Theorem \ref{positive reach} and in \cite{CiraoloRoncoroniVezzoni}, where the problem is treated under uniform bounds on the touching balls. Before to state the main result of the paper, firstly we introduce some definitions and notations. For a function $ f $ we write
$$ f^+ = \sup\{f,0\}\quad \textrm{and} \quad  f^- = -\inf\{f,0\}. $$
We denote with $ A_\Omega $ the norm of the second fundamental form of $ \partial\Omega $: 
\begin{equation*}
	A_\Omega(x) = \bigg(\sum_{i=1}^n \kappa_{\Omega, i}(x)^2 \bigg)^{\frac{1}{2}} \quad \textrm{for $ x \in \partial \Omega $.}
\end{equation*} 
\begin{definition}[Compactly supported sequences]\label{def compactly supported}
	We say that a sequence $ \Omega_\ell $ of subsets of $ \mathbb{R}^{n+1} $ is \emph{compactly supported} if there exists a ball $ B_R $ such that $ \Omega_\ell \subseteq B_R $ for all $ \ell \geq 1 $.
\end{definition}
\begin{definition}[Sequences of finite total curvature]\label{def finite total curvature}
	We say that a sequence $ \Omega_\ell \subseteq \mathbb{R}^{n+1} $ of open sets with $C^2 $-boundary has \emph{finite total curvature} if 
	\begin{equation*}
		\sup_{\ell \geq 1} \int_{\partial \Omega_\ell} A_{\Omega_\ell}^n\, d\mathcal{H}^n < \infty.
	\end{equation*}
\end{definition}
\begin{definition}[Asymptotically $ k $-mean convex sequences]\label{def asymptotically k convex}
	Let $ k \in \{1, \ldots, n\} $. We say that a sequence $\Omega_\ell \subseteq \mathbb{R}^{n+1} $, $ \ell \geq 1 $, of open sets with $ C^2 $-boundary is \emph{asymptotically $ k $-mean convex} if and only if 
	$$ \lim_{\ell \to \infty}\int_{\partial \Omega_\ell} (H_{\Omega_\ell, i})^- \, d\mathcal{H} ^n = 0 \quad \textrm{for $ i = 1, \ldots , k $.} $$
	
\noindent If $ k = 1 $ we simply say that $ \Omega_\ell $ is asymptotically mean convex.
\end{definition}

\begin{remark}\label{rmk mean convexity}
In relation to Definition \ref{def asymptotically k convex}, we recall a well known fact.	If $ k \in \{1, \ldots , n\} $ and $ \Omega \subseteq \mathbb{R}^{n+1} $ is a bounded and connected $C^2 $-domain with constant $ k $-th mean curvature function, then 
\begin{equation}\label{rmk mean convexity eq1}
	H_{\Omega, i}(x) > 0 \quad \textrm{for every $ x \in \partial \Omega $ and for $ 1 \leq i \leq k $.}
\end{equation}  
	 See \cite[page 450]{RosRevista} for details. The condition \eqref{rmk mean convexity eq1} is usually called $ k $-convexity or $ k $-mean convexity. It naturally appears in many problems involving higher-order mean curvature functions as it guarantees the ellipticity of the related fully non-linear PDE's equations; see the pioneering \cite{CaffarelliNirenberSpruck85}.
\end{remark}

This is the main result of the paper.
\begin{theorem}\label{main}
	Let $ n \geq 2 $, $ k \in \{2, \ldots , n\} $ and let $ \Omega_\ell \subseteq \mathbb{R}^{n+1} $ be a compactly supported and asymptotically $ (k-1) $-mean convex sequence with finite total curvature. Suppose that the sets $ \Omega_\ell $ converge in volume and perimeter to a set $ \Omega \subseteq \mathbb{R}^{n+1}$, and  there exist $ \lambda \in \mathbb{R} $ and $ M > 0 $ such that
	\begin{equation}\label{main: hp1}
		\lim_{\ell \to \infty}\int_{\partial \Omega_\ell}| H_{\Omega_\ell, k} -\lambda | \, d\mathcal{H}^n =0
	\end{equation} 
		\begin{equation}\label{main: hp3}
		\lim_{\ell \to \infty} \mathcal{H}^n(\{x \in \partial \Omega_\ell: H_{\Omega_\ell, 1}(x) \geq M\})=0.
	\end{equation}

Then $ \Omega $ is $ \mathcal{L}^{n+1} $ almost equal to a finite union of closed balls of the same radius with disjointed interiors.  The radius $ \rho $ of the balls satisfies the relations
	\begin{equation}\label{main: conclusion}
		\rho = \frac{(n+1)\mathcal{L}^{n+1}(\Omega)}{\mathcal{H}^n(\partial^\ast \Omega)} \quad \textrm{and} \quad \lambda = {n \choose k}\rho^{-k}.
	\end{equation}
\end{theorem}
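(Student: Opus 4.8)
The plan is to pass to the limit in the curvature measures and reduce the problem to the measure-theoretic soap bubble theorem for sets of positive reach (Theorem \ref{soap bubble th positive reach}). The finite total curvature hypothesis $\sup_\ell \int_{\partial\Omega_\ell} A_{\Omega_\ell}^n\,d\mathcal{H}^n < \infty$ is the engine: by a monotonicity/Michael–Simon-type estimate it gives a uniform lower bound on $\reach$-related quantities only in an averaged sense, but more importantly it controls the Gauss map and the generalized curvature (normal cycle) of the $\Omega_\ell$, so that the unit normal bundles $\Nor(\Omega_\ell)$ — viewed as integral $n$-currents in $\mathbb{R}^{n+1}\times\mathbb{S}^n$ — have uniformly bounded mass together with their boundaries. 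By the compactness theorem for integral currents (using compact support from Definition \ref{def compactly supported}), a subsequence of the normal cycles converges to an integral current $T$, which one identifies as (a piece of) the normal cycle of the limit set $\Omega$; in particular $\Omega$ has locally positive reach away from a small set, and its curvature measures $\Phi_0^\Omega,\ldots,\Phi_n^\Omega$ arise as weak limits of $\Phi_j^{\Omega_\ell}$.

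The next step is to upgrade the scalar hypotheses \eqref{main: hp1} and \eqref{main: hp3} into measure-level statements about $\Omega$. From \eqref{main: hp1} the $k$-th curvature measure of the limit satisfies $\Phi_{n-k}^\Omega = \binom{n}{k}^{-1}\lambda\,\mathcal{H}^n\restrict \partial^\ast\Omega$ (up to the normalizing constant), i.e.\ the limit has \emph{constant} $k$-th mean curvature in the generalized sense. The asymptotic $(k-1)$-mean convexity (Definition \ref{def asymptotically k convex}) guarantees that the limiting lower-order curvature measures $\Phi_{n-1}^\Omega,\ldots,\Phi_{n-k+1}^\Omega$ are non-negative, which is exactly the $k$-convexity/ellipticity condition recalled in Remark \ref{rmk mean convexity} needed to run the Heintze–Karcher / Montiel–Ros argument of \cite{HugSantilli}. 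The role of \eqref{main: hp3} is subtler: it prevents mass of the normal cycle from escaping to infinity in the $\mathbb{S}^n$ direction along the region where $\partial\Omega_\ell$ develops large curvature, ensuring the limit current has no spurious boundary and that $\partial^\ast\Omega$ carries the full perimeter; concretely, on the complement of a set of vanishing $\mathcal{H}^n$-measure the mean curvature is bounded by $M$, so the Gauss map is controlled there and one gets the reverse Heintze–Karcher inequality with equality in the limit.

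Once $\Omega$ is shown to be a set of positive reach (or a suitable generalized soap bubble) with constant generalized $k$-th mean curvature and the requisite $(k-1)$-convexity, the characterization theorem (Theorem \ref{soap bubble th positive reach}, or \cite[Theorem A]{HugSantilli}) forces $\Omega$ to be $\mathcal{L}^{n+1}$-a.e.\ a finite union of closed balls of equal radius with disjoint interiors. The radius formula $\rho = (n+1)\mathcal{L}^{n+1}(\Omega)/\mathcal{H}^n(\partial^\ast\Omega)$ follows from the equality case of the isoperimetric-type Heintze–Karcher inequality applied componentwise, and $\lambda = \binom{n}{k}\rho^{-k}$ by computing $H_{B_\rho,k}$ on a single ball. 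I expect the main obstacle to be the current-compactness step: one must show that the finite total curvature bound genuinely controls the mass of the normal cycles \emph{and their boundaries} — the boundary term is where the large-mean-curvature region flagged by \eqref{main: hp3} could in principle create trouble — and then identify the limit current as an honest normal cycle rather than a general integral current, which requires ruling out cancellation/folding of the Gauss maps. This is precisely the "substantial novel method of proof" alluded to in the introduction, replacing the uniform $\reach$ bounds used in Theorem \ref{positive reach}.
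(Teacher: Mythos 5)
Your proposal gets the high-level ingredients right (normal cycle compactness from finite total curvature, passage to the limit in Minkowski formulae, a Heintze--Karcher argument), but it mislocates the central step and would not close. The decisive gap is the reduction to Theorem~\ref{soap bubble th positive reach}: that theorem requires $\Omega$ (or its closure) to be a set of \emph{positive reach}, and nothing in the hypotheses forces the limit to have positive reach. In fact, the whole point of the theorem is to dispense with touching-ball/reach bounds, and the paper explicitly avoids any positive reach claim on the limit. What \emph{is} extracted from \eqref{main: hp3} and the finite total curvature bound (via H\"older and Reshetnyak continuity for the first variation) is that $V_\Omega$ is a varifold of \emph{bounded mean curvature} --- a much weaker structural property. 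The correct target is therefore the Heintze--Karcher inequality for sets of finite perimeter with bounded distributional mean curvature (Theorem~\ref{HK} / Corollary~\ref{HK corollary}), not the positive reach soap bubble theorem. Your reading of \eqref{main: hp3} as controlling normal-cycle mass or boundary escape is also off: the currents $N_{\Omega_\ell}$ are boundaryless a priori, so no boundary control is needed; the role of \eqref{main: hp3} is precisely the $L^\infty$ bound on $\bm{h}(V_\Omega,\cdot)$ needed to invoke Allard--Duggan regularity, Menne's $C^2$-rectifiability, and the unit normal bundle machinery in Lemma~\ref{lem varifolds}.

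The second missing idea is the bridge between the two distinct limits: the varifold limit $V_\Omega$ of the boundaries and the current limit $T$ of the normal cycles. You write that one "identifies $T$ as (a piece of) the normal cycle of $\Omega$" --- but there is no a priori normal cycle of $\Omega$ to identify with, and the Legendrian cycle $T$ may carry extra sheets (the strata $W_T^{(j)}$ with $j<n$, where some principal curvatures are $+\infty$). The paper must prove two nontrivial compatibility statements: that $\nor(C)$ (the measure-theoretic unit normal bundle of the closed representative of $\Omega$) is $\mathcal{H}^n$-a.e.\ contained in $W_T^{(n)}$, and that the Legendrian curvatures of $T$ agree with the varifold mean curvature of $V_\Omega$ on $\nor(C)$. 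Both require Lemma~\ref{lem varifolds} (especially the Lusin $(N)$-property \eqref{lem varifolds conclusion 2} and the fine curvature identification) together with the representation of $T\restrict\varphi_n$ as an integral over $W_T^{(n)}$. Only after this bridge is in place can the Minkowski identity for $T$, the Maclaurin inequality, and the Heintze--Karcher inequality for $V_\Omega$ be combined. Without these two pieces --- abandoning the positive reach route and constructing the $T$-to-$V_\Omega$ bridge --- the argument does not go through.
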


\noindent In the classical case of $ 2 $-dimensional surfaces in $ \mathbb{R}^3 $,  in Theorem \ref{main} (i.e.\ $ n = 2 $ and $ k = 2 $) one can equivalently replace the condition of finite total curvature with a uniform bound on the $ L^2 $-norm of the mean curvature functions, since $A_{\Omega_\ell}^2 = H_{\Omega_\ell, 1}^2 - 2 H_{\Omega_\ell, 2} $.

\begin{corollary}\label{main corollary}
	Let $ \Omega_\ell \subseteq \mathbb{R}^{3} $ be a compactly supported and asymptotically mean convex sequence with
	$$ \sup_{\ell \geq 1} \int_{\partial \Omega_\ell} H_{\Omega_\ell, 1}^2\, d\mathcal{H}^2 < \infty. $$
	Suppose that the sets $ \Omega_\ell $ converge in volume and perimeter to a set $ \Omega \subseteq \mathbb{R}^{3}$,  and  there exist $ \lambda \in \mathbb{R} $ and $ M > 0 $ such that
	\begin{equation*}
		\lim_{\ell \to \infty}\int_{\partial \Omega_\ell}| H_{\Omega_\ell, 2} -\lambda | \, d\mathcal{H}^2 =0
	\end{equation*} 
	\begin{equation}\label{main corollary: hp}
		\lim_{\ell \to \infty} \mathcal{H}^2(\{x \in \partial \Omega_\ell: H_{\Omega_\ell, 1}(x) \geq M\})=0.
	\end{equation}
	
	Then the conclusion of Theorem \ref{main} holds with $ n = k = 2 $.
\end{corollary}

\begin{remark}
The hypothesis \eqref{main: hp3} in Theorem \ref{main} (or \eqref{main corollary: hp} in Corollary \ref{main corollary})	amounts to require that the mean curvatures are uniformly bounded from above outside a set of vanishing area. This is a  technical condition, which is used in section \ref{section HK} to derive some key fine properties of the varifold associated with the reduced boundary of the limiting set $ \Omega $. This condition is clearly satisfied whenever one has a sequence such that each set $ \Omega_\ell $ is made of several domains of uniformly bounded mean curvature connected by small necks of vanishing area (in which case we do not need to care about the behaviour of the mean curvatures on the vanishing necks). 

We conjecture that the hypothesis \eqref{main: hp3}  can be completely removed from Theorem \ref{main}.
\end{remark}

\begin{remark}
Besides its own interest, the uniqueness problem $(UP_k)$ for $ k \geq 2 $ naturally arises in the study of higher-order isoperimetric type inequalities (see \cite{GuanLi2009} and \cite{ChangWang2014}) and in the study of the geometric properties of hypersurfaces with prescribed higher-order mean curvatures (see \cite{GuanLiLi} and references therein).
\end{remark}

\subsection{Method of proof and organization of the paper}
The proof of Theorem \ref{main} is based on a novel geometric-measure theoretic extension of the method pioneered by Montiel and Ros. In \cite{RosRevista} and  \cite{MontielRos} the proof is based on two fundamental steps:  (1) proving a sharp Heintze-Karcher inequality with equality achieved only by balls, (2) checking that a domain with constant $ k $-th mean curvature realizes the equality case. This last step for $ k \geq 2 $ is crucially based on the Minkowski identities in \cite{Hsiung}, while it is an immediate consequence of divergence theorem for $ k = 1 $. 

The reduced boundary of the limiting set $ \Omega $ in Theorem \ref{main} is a $ n $-dimensional varifold $ V_\Omega $ of bounded mean curvature in $ \mathbb{R}^{n+1} $. In section \ref{section HK}, combining results in varifolds theory (\cite{Menne13} and \cite{SantilliBulletin}) with fine properties of the curvature  for arbitrary closed sets (\cite{SantilliAnnali}), we can quickly deduce from the general Heintze-Karcher inequality in \cite[Theorem 3.20]{HugSantilli} a sharp geometric inequality for sets of finite perimeter and bounded distributional mean curvature; see Theorem \ref{HK}.  This completes the first step of the proof. In order to complete the proof, we have to check that the limiting set $ \Omega $ satisfies the equality case in Theorem \ref{HK}, and this is the key new difficulty with respect to other aforementioned results based on the Montiel-Ros method. In fact, one needs to use a  Minkowski-type formula for the limiting set in order to use the geometric information given by the vanishing oscillation hypothesis \eqref{main: hp1} within the geometric inequality from Theorem \ref{HK}. However, Minkowski formulae for singular geometric sets are known only in very special cases, namely sets of positive reach and subanalytic sets, and their proof is a quite subtle issue based on the existence of a normal cycle; see \cite[section 3]{Fu98}. In particular, no Minkowski formulae are known in the varifolds setting. To deal with this point in Theorem \ref{main}, we consider the sequence of normal cycles $ N_{\Omega_\ell} $ associated with the exterior unit-normal bundles of the sets $ \Omega_\ell $. These are $ n $-dimensional integral currents in the product space $ \mathbb{R}^{n+1}\times \mathbb{S}^n $, which are cycles (i.e.\ $ \partial N_{\Omega_\ell} =0 $ in the sense of currents) and satisfy a Legendrian-type property, see Remark \ref{rmk: normal cycle C2 boundaries}.  It follows from the finite total curvature assumption that the masses of the integral currents $N_{\Omega_\ell} $ are uniformly bounded; henceforth we can apply Federer-Fleming compactness theorem to find, up to subsequences, that the currents $ N_{\Omega_\ell} $ converge weakly to a Legendrian cycle $ T $; see section \ref{section Legendrian cycles}. Using a representation formula for the curvature measures associated with $ T $ (see Lemma \ref{lem: representation curvature measures}), we pass to limit in the Minkowski formulae for $ \Omega_\ell $ to find a Minkowski-type formula for the Legendrian cycle $ T $; see Lemma \ref{lem: Minkowski formulae}. Now it is still not clear how to use the Minkowski formulae for $ T $ within the Heintze-Karcher inequality for $ V_\Omega $, since $ V_\Omega $ and $ T $  come from two completely different limit procedures: $ T $ is the limit in the sense of currents of the normal cycles of $ \Omega_\ell $, while $ V_\Omega $ is the limit in the sense of varifolds of the boundaries of $ \Omega_\ell $. This is a quite subtle point  and occupies most of the proof of Theorem \ref{main} in section \ref{section main}.

Finally we mention that in the appendix, firstly we generalize the soap bubble theorem \cite[Theorem A]{HugSantilli} (allowing the constant $ \lambda \in \mathbb{R} $), then we show how this theorem allows to characterize the limit of sequences of $C^2$-domains with almost constant $ k $-th mean curvature function and with a uniform bound on the exterior touching balls.

\medskip

\noindent {\bf Acknowledgements:} The author wishes to thank Francesco Maggi for useful comments on a preliminary version of this work. The author is partially supported by INdAM-GNSAGA.

\section{Preliminaries}\label{section preliminaries}

Let $ \pi_0 : \mathbb{R}^{n+1}\times \mathbb{R}^{n+1}\rightarrow \mathbb{R}^{n+1} $ and $ \pi_1 : \mathbb{R}^{n+1}\times \mathbb{R}^{n+1}\rightarrow \mathbb{R}^{n+1} $ be  defined as
$$ \pi_0(x,u) = x, \qquad \pi_1(x,u) = u. $$
If $ S \subseteq \mathbb{R}^{n+1} $ we define $ \dist(x, S) = \inf\{|x-a|: a \in S\} $ for $ x \in \mathbb{R}^{n+1} $. If $ Q \subseteq \mathbb{R}^{n+1}\times \mathbb{R}^{n+1} $ and $ S \subseteq \mathbb{R}^{n+1} $ we define 
$$ Q \restrict S = \pi_0^{-1}(S) \cap Q. $$

We say that a subset $ S \subseteq \mathbb{R}^m $ is \emph{countably $ \mathcal{H}^k $-rectifiable} if there exists a countable family $ F $ of $ k $-dimensional embedded $ C^1 $-submanifolds of $ \mathbb{R}^m $ such that $\mathcal{H}^k(S \setminus \bigcup F) = 0 $. We denote with $ \Tan^k(\mathcal{H}^k\restrict S, x) $ the \emph{$ \mathcal{H}^k $-approximate tangent cone} of $ S $ at $ x $; see \cite[3.2.16]{Fed69}. If $ \mathcal{H}^k(S) < \infty $, $ f  $ is a Lipschitz function on $ S $, then $ \Tan^k(\mathcal{H}^k\restrict S, x) $ is a  $ k $-dimensional plane at $ \mathcal{H}^k $ a.e.\ $ x \in S $ and we denote with $ J_k^Sf $ the $ k $-dimensional approximate tangential jacobian function of $ f $; see  \cite[3.2.19, 3.2.20]{Fed69}.

\subsection{Sets of finite perimeter} We refer to \cite[Chapter 3]{AFP00} or \cite{Maggibook} for details. We recall that  $ \Omega \subseteq \mathbb{R}^{n+1} $ is a set of finite perimeter in $ \mathbb{R}^{n+1} $ if its characteristic function $ \bm{1}_\Omega $ is a function of bounded variation in $ \mathbb{R}^{n+1} $. The reduced boundary $ \partial^\ast \Omega $ of $ \Omega $ is the set of points $ x \in \mathbb{R}^{n+1} $ such that the following limit  
$$ \lim_{r \to 0} \frac{\Der \bm{1}_\Omega(B_r(x))}{|\Der \bm{1}_\Omega(B_r(x))|}  $$
exists and belongs to $ \mathbb{S}^n $, in which case we denote it by  $ \nu_\Omega(x) $ (here $ \Der \bm{1}_\Omega $ is the distributional gradient of $ \bm{1}_\Omega $). The reduced boundary is countably $ \mathcal{H}^n $-rectifiable and $ \mathcal{H}^n(\partial^\ast \Omega)  $ equals the total variation of $ \Der \bm{1}_\Omega $; this number is the perimeter of $ \Omega $. The map $ \nu_\Omega:  \partial^\ast \Omega \rightarrow \mathbb{S}^n $ is the measure-theoretic exterior unit-normal of $ \Omega $ and we define
$$ \overline{\nu}_\Omega : \partial^\ast \Omega \rightarrow \mathbb{R}^{n+1} \times \mathbb{S}^n , \qquad  \overline{\nu}_\Omega(x) =(x, \nu_\Omega(x)). $$
Notice that if $ \Omega $ is a $ C^1 $-domain then $ \partial^\ast \Omega = \partial \Omega $ and $ \nu_\Omega $ is the classical exterior unit-normal vector field of $ \Omega $.
\subsection{Currents} We refer to \cite[Chapter 6]{Simonbook} for details. The space of compactly supported $k$-forms on an open subset $ U $ of $ \mathbb{R}^p $ is denoted by the usual $ \mathcal{D}^k(U) $ and the space of $ k $-currents on $ U $ by $ \mathcal{D}_k(U) $. We denote with $ \bm{M}_W(T) $ the mass of $ T $ over an open subset $ W $ of $ U $. A sequence $ T_\ell \in \mathcal{D}_k(U)$ \emph{weakly converges} to $ T\in \mathcal{D}_k(U) $ if and only if
$$ T_\ell(\phi) \to T(\phi)\qquad \textrm{for all $ \phi \in \mathcal{D}^k(U) $.} $$

We say that a $ k $-current $ T \in \mathcal{D}_k(U) $ is an \emph{interger multiplicity rectifiable $ k $-current of $ U $} provided
$$ T(\phi) = \int_{M}\langle \eta(x), \phi(x) \rangle \theta(x)\, d\mathcal{H}^k(x) \qquad \textrm{for all $ \phi \in \mathcal{D}^k(U) $,} $$
where $ M $ is a countably $ \mathcal{H}^k $-rectifiable subset of $ U $, $ \theta : M \rightarrow \mathbb{Z}_+ $ is an $ \mathcal{H}^k $-measurable function such that $ \int_{K \cap M} \theta\, d\mathcal{H}^k < \infty $ for every compact subset $ K $ of $ U $, and $ \eta(x) = \tau_1(x) \wedge \ldots \wedge \tau_k(x) $ for $ \mathcal{H}^k $ a.e.\ $ x \in M $, where $ \tau_1(x) , \ldots , \tau_k(x) $ form an orthonormal basis of $ \Tan^k(\mathcal{H}^k \restrict M,x) $ for $ \mathcal{H}^k $ a.e.\ $ x \in M $. The set $ M $ is called \emph{carrier} of $T$ and it is $ \mathcal{H}^n $ almost uniquely determined by $ T $. Henceforth for each integer multiplicity rectifiable $ k $-current $ T $ we introduce the symbol $ W_T $ for the carrier of $ T $.
One can easily check that if $ W \subseteq U $ is an open set then   
$$ \bm{M}_W(T) = \int_{W \cap M}\theta\, d\mathcal{H}^k. $$

\subsection{Varifolds} We refer to \cite{Allard72} for details. Let $ U \subseteq \mathbb{R}^{n+1} $ be an open set. The space of $ k $-dimensional varifolds on $ U $ is denoted with $ \bm{V}_k(U) $ and the space of $ k $-dimensional integral varifolds on $ U $ with the usual $ \bm{IV}_k(U) $. Associated with $ V \in \bm{V}_k(U) $ we consider the weight measure $ \| V \| $, which is the Radon measure on $ U $ defined by $ \| V \|(S) = V(S \times \bm{G}(n+1,k)) $ for every $ S \subseteq U $,  and we denote the \emph{first variation of $ V $}  by $ \delta V $; see \cite[section 4]{Allard72} for details. A varifold $ V \in \bm{V}_k(U) $ is a varifold of \emph{bounded mean curvature} if there exists a function $ h \in L^\infty(\| V \|, \mathbb{R}^{n+1}) $ so that 
$$ \delta V(X) = \int h(x) \bullet X(x)\, d\| V \|(x), \quad \textrm{for all $ X \in \mathcal{C}^\infty_c(U, \mathbb{R}^{n+1}) $.} $$
The function $ h $ is uniquely determined by $ V $ and we write $ h = \bm{h}(V,\cdot) $.

In this paper we consider integral varifolds $ V \in \bm{IV}_n(\mathbb{R}^{n+1}) $ associated with the reduced boundary of a set of finite perimeter; namely if $ E $ is a set of finite perimeter in $ \mathbb{R}^{n+1}$ of positive volume, we define the  $ n $-dimensional  varifold $ V_E \in \bm{IV}_n(\mathbb{R}^{n+1})$ as the unique Radon measure $ V_E $ on $ \mathbb{R}^{n+1}\times \bm{G}(n+1,n) $ such that
$$ \int \phi(x, S)\, dV_E(x,S) = \int_{\partial^\ast E} \phi(x, \Tan^n(\mathcal{H}^n \restrict \partial^\ast E,x))\, d\mathcal{H}^n(x) $$
for every $ \phi \in  C_c(\mathbb{R}^{n+1}\times \bm{G}(n+1,n)) $.
 Notice that $ \| V_E \| = \mathcal{H}^n \restrict \partial^\ast E $ and
\begin{equation}\label{first variation eq1}
	\delta V_E(X) = \int_{\partial^\ast E} {\rm div}_{\mathbb{R}^{n+1}}(X)\, d\mathcal{H}^n - \int_{\partial^\ast E} \Der X(x)(\nu_E(x)) \bullet \nu_E(x)\, d\mathcal{H}^n(x)
\end{equation} 
for all $ X \in \mathcal{C}^\infty_c(\mathbb{R}^{n+1}, \mathbb{R}^{n+1}) $. 

\subsection{Normal bundle of closed sets} Suppose $ C \subseteq \mathbb{R}^{n+1} $ is a closed set. We define 
$$ \nor(C) = \{(x,u)\in C \times \mathbb{S}^{n}: \dist(x+su, C) = s \; \textrm{for some $ s > 0 $}\}. $$
and we recall that $ \nor(C) $ is always countably $ \mathcal{H}^n $-rectifiable; see \cite{SantilliAnnali}\footnote{The unit normal bundle of a closed set $ C $ in \cite{SantilliAnnali} is denoted with $N(C)$.}. On the other hand we remark that there are closed sets $ C $ for which $ \nor(C)  $ has not locally finite $ \mathcal{H}^n $-measure;  indeed this can happen even when $ C = \spt \| V \| $ and $ V \in \bm{IV}_2(\mathbb{R}^3) $ is a varifold of bounded mean curvature.

It is proved in \cite{SantilliAnnali} that for $ \mathcal{H}^n $ a.e.\ $(x,u)\in \nor(C) $ there exist a linear subspace $ T_C(x,u) $ of $ \mathbb{R}^{n+1} $ and a symmetric bilinear form $ Q_C(x,u) : T_C(x,u) \times T_C(x,u) \rightarrow \mathbb{R} $, whose eigenvalues can be used to provide an explicit representation of the approximate tangent space of $ \nor(C) $ at $ \mathcal{H}^n $ almost all points. To this end, for $ \mathcal{H}^n $ a.e.\ $(x,u)\in \nor(C) $, we define $$ -\infty < \kappa_{C,1}(x,u)\leq \ldots \leq \kappa_{C,n}(x,u) \leq \infty $$ in the following way: $ \kappa_{C,1}(x,u), \ldots , \kappa_{C,m}(x,u) $ are the eigenvalues of $ Q_C(x,u) $, where $ m = \dim T_C(x,u) $, and $ \kappa_{C,i}(x,u) = +\infty $ for $ i = m+1, \ldots , n $. Given this definition we recall the following lemma.

\begin{lemma}[\protect{cf.\ \cite[Lemma 4.11]{SantilliAnnali}}]\label{lem: Santilli20}
Let $ Q \subseteq \nor(C) $ be an $ \mathcal{H}^n $-measurable set with finite $ \mathcal{H}^n $ measure.	For $ \mathcal{H}^n $ a.e.\ $(x,u) \in Q $ there exist $ \tau_1(x,u), \ldots , \tau_n(x,u)\in \mathbb{R}^{n+1} $ such that $ \{\tau_1(x,u), \ldots, \tau_n(x,u), u\} $ is an orthonormal basis of $ \mathbb{R}^{n+1} $ and  $ \zeta_1(x, u), \ldots , \zeta_n(x,u) \in \mathbb{R}^{n+1}\times \mathbb{R}^{n+1} $, defined as
$$ \zeta_i(x,u) = (\tau_i(x,u), \kappa_i(x,u)\tau_i(x,u)) \quad \textrm{if $ \kappa_{C,i}(x,u)< \infty $} $$
$$ \zeta_i(x,u) = (0,\tau_i(x,u)) \quad \textrm{if $ \kappa_{C,i}(x,u)= \infty $}, $$
form an orthogonal basis of $ \Tan^n(\mathcal{H}^n \restrict Q, (x,u)) $.
	\end{lemma}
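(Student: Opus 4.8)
The plan is to read the statement off the tangent-space structure for the full unit normal bundle established in \cite{SantilliAnnali}, after diagonalizing the quadratic form $Q_C$, and then to localize from $\nor(C)$ to the subset $Q$. Concretely, I would first recall from \cite{SantilliAnnali} that $\nor(C)$ is countably $\mathcal{H}^n$-rectifiable and that, for $\mathcal{H}^n$ a.e.\ $(x,u)\in\nor(C)$, one has $T_C(x,u)\subseteq u^\perp$ together with the explicit description of the approximate tangent space announced before the statement: writing $L_{x,u}\colon T_C(x,u)\to T_C(x,u)$ for the self-adjoint endomorphism with $\langle L_{x,u}v,v'\rangle=Q_C(x,u)(v,v')$, the $n$-plane $\Tan^n(\mathcal{H}^n\restrict\nor(C),(x,u))$ equals $\{(v,\,L_{x,u}v+w):v\in T_C(x,u),\ w\in T_C(x,u)^\perp\cap u^\perp\}$. (In the model case $C=\overline{\Omega}$ with $\partial\Omega$ of class $C^2$, $\nor(C)$ is the graph of the exterior Gauss map and the plane above is the graph of its differential; this is the mechanism behind the curvature bookkeeping in the $\zeta_i$.)

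Next I would diagonalize. Set $m=\dim T_C(x,u)$, choose an orthonormal basis $\tau_1(x,u),\dots,\tau_m(x,u)$ of $T_C(x,u)$ of eigenvectors of $L_{x,u}$ with eigenvalues $\kappa_{C,1}(x,u),\dots,\kappa_{C,m}(x,u)$, and complete it by an orthonormal basis $\tau_{m+1}(x,u),\dots,\tau_n(x,u)$ of $T_C(x,u)^\perp\cap u^\perp$; since $T_C(x,u)\subseteq u^\perp$, the family $\{\tau_1(x,u),\dots,\tau_n(x,u),u\}$ is an orthonormal basis of $\mathbb{R}^{n+1}$. Evaluating the description above at $(v,w)=(\tau_i(x,u),0)$ for $i\le m$ and at $(v,w)=(0,\tau_i(x,u))$ for $i>m$ shows that the vectors $\zeta_i(x,u)$ of the statement belong to $\Tan^n(\mathcal{H}^n\restrict\nor(C),(x,u))$. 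A short computation --- separating the cases $i,j\le m$, $i\le m<j$ and $i,j>m$, and using orthonormality of the $\tau_i(x,u)$ together with $T_C(x,u)\perp(T_C(x,u)^\perp\cap u^\perp)$ --- gives $\langle\zeta_i(x,u),\zeta_j(x,u)\rangle=0$ for $i\ne j$, while $|\zeta_i(x,u)|^2=1+\kappa_{C,i}(x,u)^2$ when $\kappa_{C,i}(x,u)<\infty$ and $|\zeta_i(x,u)|^2=1$ otherwise, both positive. Hence $\zeta_1(x,u),\dots,\zeta_n(x,u)$ are $n$ pairwise orthogonal nonzero vectors of the $n$-plane $\Tan^n(\mathcal{H}^n\restrict\nor(C),(x,u))$, so they form an orthogonal basis of it.

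Finally I would localize. Since $\mathcal{H}^n(Q)<\infty$ and $Q\subseteq\nor(C)$ is countably $\mathcal{H}^n$-rectifiable, $\Tan^n(\mathcal{H}^n\restrict Q,(x,u))$ is an $n$-plane for $\mathcal{H}^n$ a.e.\ $(x,u)\in Q$; and by the locality of approximate tangent cones --- fixing a decomposition $\nor(C)=N\cup\bigcup_j M_j$ into an $\mathcal{H}^n$-null set and pairwise disjoint Borel pieces of $n$-dimensional $C^1$ submanifolds, at $\mathcal{H}^n$ a.e.\ $(x,u)\in Q\cap M_j$ the set $Q\cap M_j$ has density one in $M_j$ and $Q\setminus M_j$ has $n$-density zero, by the Lebesgue density theorem --- one obtains $\Tan^n(\mathcal{H}^n\restrict Q,(x,u))=\Tan^n(\mathcal{H}^n\restrict\nor(C),(x,u))$ for $\mathcal{H}^n$ a.e.\ $(x,u)\in Q$. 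Combining the three steps proves the lemma. I do not expect a genuine obstacle here: the one substantive ingredient is the explicit tangent-space representation of $\nor(C)$ taken from \cite{SantilliAnnali} (proved there by parametrizing $\nor(C)$, up to $\mathcal{H}^n$-null sets, by bi-Lipschitz images of level sets of $\dist(\cdot,C)$ and differentiating), and the only care needed is to match that paper's conventions for $T_C$, $Q_C$ and the ordering of the $\kappa_{C,i}$.
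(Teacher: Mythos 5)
The paper does not write out a proof here; the statement is recorded with a reference to \cite[Lemma 4.11]{SantilliAnnali}, and your sketch is a reasonable reconstruction of the underlying argument (diagonalize $Q_C$, exhibit the $\zeta_i$ as an orthogonal spanning set, localize to $Q$). The one place that needs care is your opening claim that $\Tan^n(\mathcal{H}^n\restrict\nor(C),(x,u))$ is an $n$-plane for $\mathcal{H}^n$ a.e.\ $(x,u)\in\nor(C)$. The paper explicitly remarks that $\mathcal{H}^n\restrict\nor(C)$ need not be locally finite (this can occur even when $C$ is the support of a $2$-dimensional integral varifold of bounded mean curvature in $\mathbb{R}^3$), and at a point where the measure has infinite upper density Federer's approximate tangent cone can be strictly larger than any $n$-plane. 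That is exactly why the lemma carries the hypothesis $\mathcal{H}^n(Q)<\infty$ and is phrased for $\Tan^n(\mathcal{H}^n\restrict Q,\cdot)$ rather than for the whole bundle. Fortunately your third paragraph already supplies the correct route: the density argument on a $C^1$ decomposition $\nor(C)=N\cup\bigcup_j M_j$ gives $\Tan^n(\mathcal{H}^n\restrict Q,(x,u))=\Tan(M_j,(x,u))$ for $\mathcal{H}^n$ a.e.\ $(x,u)\in Q\cap M_j$, and it is this classical tangent plane of the $C^1$ piece $M_j$ that should be matched against the span of the $\zeta_i$ via the pointwise structure of $T_C$, $Q_C$, $\kappa_{C,i}$ from \cite{SantilliAnnali}. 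So drop the intermediate passage through the global object $\Tan^n(\mathcal{H}^n\restrict\nor(C),\cdot)$; the remainder of your argument --- the orthonormal diagonalizing basis, the orthogonality and nonvanishing of the $\zeta_i$ --- is fine as written.
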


We also need the following result, relating the numbers $ \kappa_{C,i} $ with the principal curvature of a $ C^2 $-hypersurface that intersects $ C $. 

\begin{lemma}[\protect{cf.\ \cite[Lemma 6.1]{SantilliAnnali}}]\label{lem: Santilli20II}
If $ \Sigma $ is an embedded $ C^2 $-hypersurface then there exists $ R \subseteq \Sigma \cap \partial C $ such that $$ \mathcal{H}^n((\Sigma \cap \partial C) \setminus R) =0, \qquad  \nor(C)\restrict R \subseteq \nor(\Sigma) $$ and 
\begin{equation*}
	\kappa_{C,i}(x,u) = \kappa_{\Sigma,i}(x,u) \quad \textrm{for $ \mathcal{H}^n $ a.e.\ $(x,u) \in \nor(C)\restrict R $}
\end{equation*}  
for $ 1 \leq i \leq n $. Here $ \nor(\Sigma) $ is the classical unit-normal bundle of $ \Sigma $ and $$ \kappa_{\Sigma,1}(x,u)\leq \ldots \leq \kappa_{\Sigma, n}(x,u) $$ are the principal curvatures of $ \Sigma $ at $ x $ in the direction $ u $.
\end{lemma}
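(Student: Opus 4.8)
The plan is to take $R$ to be, up to an $\mathcal{H}^n$-null correction, the set of $\mathcal{H}^n$-density points of $\Sigma\cap\partial C$ relative to $\Sigma$, and then to establish the three assertions of the lemma in order: first the inclusion $\nor(C)\restrict R\subseteq\nor(\Sigma)$, and afterwards the curvature identity, the latter by matching the approximate tangent plane of $\nor(C)$ at a point of $\nor(C)\restrict R$ with the tangent plane of the $C^1$-manifold $\nor(\Sigma)$ and reading off eigenvalues via Lemma \ref{lem: Santilli20}. So I would begin by setting
\[
R_0 = \Big\{ x \in \Sigma \cap \partial C : \lim_{r \to 0^+} \frac{\mathcal{H}^n\big((\Sigma \cap \partial C) \cap B_r(x)\big)}{\mathcal{H}^n\big(\Sigma \cap B_r(x)\big)} = 1 \Big\},
\]
which satisfies $\mathcal{H}^n((\Sigma \cap \partial C)\setminus R_0) = 0$ by the Lebesgue density theorem on the $C^2$-manifold $\Sigma$.

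The first substantial step is the inclusion $\nor(C)\restrict R_0 \subseteq \nor(\Sigma)$. Given $(x,u) \in \nor(C)$ with $x \in R_0$, pick $s > 0$ with $\dist(x+su, C) = s$, so that the open ball $B_s(x+su)$ misses $C$, hence misses $\Sigma \cap \partial C$. If the component $u_T$ of $u$ tangent to $T_x\Sigma$ were nonzero, then representing $\Sigma$ near $x$ as a $C^2$-graph $w \mapsto x + w + g(w)$ over $T_x\Sigma$ with $g(0)=0$ and $\Der g(0)=0$, the expansion
\[
\big| x + w + g(w) - (x+su) \big|^2 = s^2 - 2 s\, (w \bullet u_T) + o(|w|) \qquad (w \to 0)
\]
holds uniformly in the direction of $w$; consequently a whole solid cone of points of $\Sigma$ near $x$, namely those with $2(w\bullet u_T) \geq |u_T|\,|w|$, lies in $B_s(x+su)$ and therefore avoids $\partial C$. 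This forces the lower $\mathcal{H}^n$-density of $\Sigma \cap \partial C$ at $x$ relative to $\Sigma$ to be strictly less than $1$, contradicting $x \in R_0$. Hence $u \perp T_x\Sigma$, i.e.\ $(x,u) \in \nor(\Sigma)$ (recall $|u|=1$ and that $\nor(\Sigma)$ denotes the classical unit-normal bundle, imposing no distance condition).

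For the curvature identity I would use that, $\Sigma$ being $C^2$, the bundle $\nor(\Sigma)$ is an $n$-dimensional $C^1$-submanifold of $\mathbb{R}^{n+1}\times\mathbb{S}^n$ whose tangent plane at $(x,u)$ is the graph $\{(w, \Der\nu_x(w)) : w \in T_x\Sigma\}$, where $\nu$ is the local $C^1$ unit-normal field along $\Sigma$ with $\nu(x)=u$ and $\Der\nu_x : T_x\Sigma \to T_x\Sigma$ is selfadjoint with eigenvalues $\kappa_{\Sigma,1}(x,u) \leq \cdots \leq \kappa_{\Sigma,n}(x,u)$. By the previous step $\nor(C)\restrict R_0 \subseteq \nor(C)\cap\nor(\Sigma)$; applying the Lebesgue density theorem to the $\sigma$-finite measures $\mathcal{H}^n\restrict\nor(C)$ and $\mathcal{H}^n\restrict\nor(\Sigma)$ and to their common subset $\nor(C)\cap\nor(\Sigma)$, together with Lemma \ref{lem: Santilli20} on a $\sigma$-finite exhaustion of $\nor(C)\restrict R_0$, produces an $\mathcal{H}^n$-null set $E \subseteq \nor(C)\restrict R_0$ such that at every $(x,u) \in (\nor(C)\restrict R_0)\setminus E$ the approximate tangent planes of $\nor(C)$ and of $\nor(\Sigma)$ coincide with $T_{(x,u)}\nor(\Sigma)$ and the orthonormal frame $\tau_1,\dots,\tau_n$ and vectors $\zeta_1,\dots,\zeta_n$ of Lemma \ref{lem: Santilli20} are available. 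Setting $R := R_0 \setminus \pi_0(E)$ — legitimate since $\pi_0$ is $1$-Lipschitz, so $\mathcal{H}^n(\pi_0(E))=0$ — gives the first two claims of the lemma, and at each $(x,u) \in \nor(C)\restrict R$ the frame $\{\tau_1,\dots,\tau_n\}$ is an orthonormal basis of $u^\perp = T_x\Sigma$. Projecting the common tangent plane to the first factor, $\mathrm{span}\{\tau_i : \kappa_{C,i}(x,u) < \infty\} = T_x\Sigma$, so all $\kappa_{C,i}(x,u)$ are finite; then $\mathrm{span}\{\zeta_1,\dots,\zeta_n\}$ is the graph over $T_x\Sigma$ of the selfadjoint endomorphism with eigenbasis $\{\tau_i\}$ and eigenvalues $\kappa_{C,i}(x,u)$, and equating it with $\{(w,\Der\nu_x(w))\}$ identifies this endomorphism with $\Der\nu_x$; comparing spectra in increasing order (the sign conventions agreeing, as one checks on spheres) yields $\kappa_{C,i}(x,u)=\kappa_{\Sigma,i}(x,u)$ for $1 \leq i \leq n$, for $\mathcal{H}^n$-a.e.\ $(x,u) \in \nor(C)\restrict R$.

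I expect the main obstacle to be the geometric step above — that over a density point of $\Sigma\cap\partial C$ every direction in $\nor(C)$ is perpendicular to $\Sigma$ — together with the bookkeeping needed to convert the a.e.\ statements on the rectifiable (and possibly not locally $\mathcal{H}^n$-finite) set $\nor(C)$ into a statement indexed by $x \in \Sigma\cap\partial C$: because the fibre-projection $\pi_0$ can increase $\mathcal{H}^n$-measure, one must discard an $\mathcal{H}^n$-null subset of $\nor(C)$ and then check that its $\pi_0$-image is still $\mathcal{H}^n$-null, which is exactly where the Lipschitz continuity of $\pi_0$ enters. Once Lemma \ref{lem: Santilli20} is in hand, the comparison of the two $n$-planes is routine linear algebra.
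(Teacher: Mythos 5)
The paper does not supply its own proof of this lemma---it cites \cite[Lemma 6.1]{SantilliAnnali}---so there is no internal argument to compare against. Your reconstruction is correct and takes the natural route: restrict to density points $R_0$ of $\Sigma\cap\partial C$ inside $\Sigma$; use the $C^2$ Taylor expansion of $\Sigma$ to show that if $(x,u)\in\nor(C)$ with $x\in R_0$ and $u$ had a nonzero component $u_T$ tangent to $\Sigma$, then a solid cone of nearby points of $\Sigma$ would fall into $B_s(x+su)$ and hence miss $\partial C$, contradicting density one, so $u\perp T_x\Sigma$ and $(x,u)\in\nor(\Sigma)$; then, since $\nor(\Sigma)$ is a $C^1$-manifold whose tangent plane at $(x,u)$ is the graph of $\Der\nu_x$ over $T_x\Sigma$, match it with the approximate tangent plane of $\nor(C)$ furnished by Lemma~\ref{lem: Santilli20} to force all $\kappa_{C,i}(x,u)$ finite and to identify the resulting endomorphism with $\Der\nu_x$, whence the ordered eigenvalues agree. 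You correctly flagged the two technical points: $\mathcal{H}^n\restrict\nor(C)$ is only $\sigma$-finite, so the tangent-plane locality argument must be run on a finite-measure exhaustion (after which one may also observe that $\nor(C)\restrict R_0\subseteq\nor(\Sigma)$ already has locally finite $\mathcal{H}^n$-measure); and passing from the exceptional null set $E\subset\nor(C)\restrict R_0$ to a shrinking of $R$ uses that $\pi_0$ is $1$-Lipschitz so $\mathcal{H}^n(\pi_0(E))=0$, although this last refinement is optional since the lemma only asserts the identity $\mathcal{H}^n$-a.e.\ on $\nor(C)\restrict R$.
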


We recall now the general Heintze-Karcher type inequality for arbitrary closed sets proved in \cite{HugSantilli}, which is used in the proof of Theorem \ref{HK}. 

\begin{theorem}[\protect{cf.\ \cite[Theorem 3.20]{HugSantilli}}]\label{HK general}
	Let $ C \subseteq \mathbb{R}^{n+1} $ be a bounded closed set with non empty interior. Let $ K = \mathbb{R}^{n+1}  \setminus {\rm interior}(C) $ and assume that 
	$$ \sum_{i=1}^n \kappa_{K,i}(x,u) \leq 0 \quad \textrm{for $ \mathcal{H}^n $ a.e.\ $(x,u) \in \nor(K)$.} $$
	Then 
	$$ (n+1)\mathcal{L}^{n+1}({\rm interior}(C)) \leq  \int_{\nor(K)}J_n^{\nor(K)}\pi_0(x,u)\,\frac{n}{|\sum_{i=1}^n \kappa_{K,i}(x,u)|}\, d\mathcal{H}^n(x,u). $$
	It the equality holds and there exists $ q < \infty $ such that $|\sum_{i=1}^n \kappa_{K,i}(x,u)|\leq q $ for $ \mathcal{H}^n $ a.e.\ $(x,u)\in \nor(K) $, then $ C $ equals the union of finitely many closed balls with disjointed interiors.
\end{theorem}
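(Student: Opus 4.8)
\emph{Proof proposal.} The plan is to run a measure-theoretic version of the classical Heintze--Karcher computation on the unit normal bundle of $K$, exploiting the first-order structure of $\nor(K)$ recorded in Lemma~\ref{lem: Santilli20}. First I would dispose of a trivial case: since $\kappa_{K,1}(x,u)>-\infty$ always, the hypothesis $\sum_{i=1}^n\kappa_{K,i}(x,u)\le 0$ forces $\kappa_{K,i}(x,u)<\infty$ for all $i$ at $\mathcal{H}^n$ a.e.\ $(x,u)\in\nor(K)$, and one may further assume $\sum_{i=1}^n\kappa_{K,i}(x,u)<0$ for $\mathcal{H}^n$ a.e.\ such $(x,u)$, as otherwise the right-hand side is $+\infty$. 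Then I would introduce the inner parallel map
\[
\xi:\nor(K)\times(0,\infty)\longrightarrow\mathbb{R}^{n+1},\qquad \xi(x,u,t)=x+tu,
\]
and the reach function $T(x,u)=\sup\{s>0:\dist(x+su,K)=s\}$. If $y\in{\rm interior}(C)$ and $x\in K$ realizes $r:=\dist(y,K)>0$, then $(x,(y-x)/r)\in\nor(K)$ with $r\le T(x,(y-x)/r)$ and $y=\xi(x,(y-x)/r,r)$; hence, with $G:=\{(x,u,t):0<t<T(x,u)\}$, one has $\xi(G)\subseteq{\rm interior}(C)$ and $\mathcal{L}^{n+1}({\rm interior}(C)\setminus\xi(G))=0$.

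Next I would compute the Jacobian of $\xi$ on $G$. Using the orthogonal basis $\zeta_1,\dots,\zeta_n$ of $\Tan^n(\mathcal{H}^n\restrict\nor(K),(x,u))$ from Lemma~\ref{lem: Santilli20} together with the $t$-direction, one gets $D\xi(\zeta_i)=(1+t\kappa_{K,i}(x,u))\tau_i(x,u)$ and $D\xi(\partial_t)=u$, whence
\[
J_{n+1}\xi(x,u,t)=J_n^{\nor(K)}\pi_0(x,u)\,\prod_{i=1}^n\bigl(1+t\kappa_{K,i}(x,u)\bigr).
\]
Since no nearest point lies beyond the first focal point, $T(x,u)\le -1/\kappa_{K,1}(x,u)$, and on this range $1+t\kappa_{K,i}(x,u)\ge 1+t\kappa_{K,1}(x,u)>0$ for every $i$. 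Writing $s(x,u):=|\sum_{i=1}^n\kappa_{K,i}(x,u)|$, the arithmetic--geometric mean inequality gives $\prod_i(1+t\kappa_{K,i})\le(1-\tfrac{t}{n}s(x,u))^n$, and, since $-1/\kappa_{K,1}\le n/s(x,u)$, the substitution $v=1-\tfrac{t}{n}s(x,u)$ yields
\[
\int_0^{T(x,u)}\prod_{i=1}^n\bigl(1+t\kappa_{K,i}(x,u)\bigr)\,dt\;\le\;\int_0^{n/s(x,u)}\Bigl(1-\tfrac{t}{n}s(x,u)\Bigr)^n dt\;=\;\frac{n}{(n+1)\,s(x,u)}.
\]
Combining the area formula for $\xi$ on $G$ (localized to $\mathcal{H}^n$-finite pieces of $\nor(K)$ if needed), the Fubini disintegration of $\mathcal{H}^{n+1}\restrict(\nor(K)\times(0,\infty))$, and the two displays above produces exactly $(n+1)\mathcal{L}^{n+1}({\rm interior}(C))\le\int_{\nor(K)}J_n^{\nor(K)}\pi_0(x,u)\,\tfrac{n}{|\sum_i\kappa_{K,i}(x,u)|}\,d\mathcal{H}^n(x,u)$.

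For the equality case I would trace the inequalities backwards. Because for $\mathcal{L}^{n+1}$ a.e.\ $y$ the nearest point of $y$ in $K$ is unique and $J_{n+1}\xi>0$ on $G$, the area-formula step is automatically an equality; so equality in the theorem forces, at $\mathcal{H}^n$ a.e.\ $(x,u)\in\nor(K)$, equality in the arithmetic--geometric mean step --- hence $\kappa_{K,1}(x,u)=\dots=\kappa_{K,n}(x,u)=:\kappa(x,u)<0$ --- together with $T(x,u)=1/|\kappa(x,u)|$. Thus the normal-carrying part of $\partial C$ is umbilic with curvature $\kappa$ towards ${\rm interior}(C)$, so locally a sphere of radius $1/|\kappa|$, and at each such point the inner normal segment runs all the way to the centre of the osculating ball; a connectedness/covering argument then exhibits ${\rm interior}(C)$ as a pairwise-disjoint union of open balls, while the bound $|\sum_i\kappa_{K,i}|\le q$ forces the radii to be $\ge n/q$, so boundedness of $C$ leaves only finitely many. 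The main obstacle is not the computation but its measure-theoretic underpinnings: one must know that $\nor(K)$ is $\mathcal{H}^n$-rectifiable with the first-order expansion $D\xi(\zeta_i)=(1+t\kappa_{K,i})\tau_i$ valid $\mathcal{H}^n$ a.e., that the covering of ${\rm interior}(C)$ by $\xi(G)$ is essentially injective, and that $T\le-1/\kappa_{K,1}$ --- exactly the theory of the normal bundle of arbitrary closed sets from \cite{SantilliAnnali} --- and, in the equality case, the passage from an $\mathcal{H}^n$-a.e.\ umbilicity statement on $\nor(K)$ to the global ``union of balls'' conclusion, which is delicate precisely because $\partial C$ need not be a manifold.
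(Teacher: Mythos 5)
The paper does not prove Theorem~\ref{HK general}; it is imported from \cite[Theorem 3.20]{HugSantilli}, and your outline correctly reconstructs the Montiel--Ros/Heintze--Karcher argument on the generalized normal bundle $\nor(K)$ that underlies that reference, with the Jacobian expansion $J_{n+1}\xi = J_n^{\nor(K)}\pi_0\cdot\prod_i(1+t\kappa_{K,i})$, the AM--GM estimate, and the focal bound $T\le -1/\kappa_{K,1}\le n/|\sum_i\kappa_{K,i}|$ all computed correctly. You also accurately flag where the real technical content lies (rectifiability and first-order structure of $\nor(K)$ via \cite{SantilliAnnali}, essential injectivity of the nearest-point foliation, and, in the rigidity step, passing from $\mathcal{H}^n$-a.e.\ umbilicity together with $T=1/|\kappa|$ to the finite-union-of-balls conclusion when $\partial C$ is not a priori a manifold), which is precisely what occupies the proof in \cite{HugSantilli}.
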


\section{Legendrian cycles and Curvature measures}\label{section Legendrian cycles}

We denote with $ E_{n+1}\in \mathcal{D}^{n+1}(\mathbb{R}^{n+1}) $ the \emph{canonical volume form} of $ \mathbb{R}^{n+1} $, and with $$ \alpha_{\mathbb{R}^{n+1}} : \mathbb{R}^{n+1}\times \mathbb{R}^{n+1} \rightarrow {\textstyle \bigwedge^{1}}(\mathbb{R}^{n+1}\times \mathbb{R}^{n+1}) $$ the \emph{contact $ 1 $-form}; i.e.\
$$ \langle (y,v), \alpha_{\mathbb{R}^{n+1}}(x,u)\rangle = y \bullet u \qquad \textrm{for $ (x,u), (y,v)\in \mathbb{R}^{n+1} $.} $$

A \emph{Legendrian cycle of $ \mathbb{R}^{n+1} $} is an integer-multiplicity rectifiable $ n $-current $ T $ of $ \mathbb{R}^{n+1} \times \mathbb{R}^{n+1} $ such that 
\begin{equation*}
	\textrm{$\spt(T) $ is compact subset of $\mathbb{R}^{n+1} \times \mathbb{S}^n$,} \quad \partial T =0, \quad T \restrict \alpha_{\mathbb{R}^{n+1}} =0.
\end{equation*}

The following result describes the approximate tangent space of the carrier of a Legendrian cycle.
\begin{theorem}[\protect{cf.\ \cite[Theorem 9.2]{RatajZaehlebook}}]\label{theo: Rataj-Zaehle}
	Let $ T $ be a Legendrian cycle of $ \mathbb{R}^{n+1} $ with carrier $ W_T $. For $ \mathcal{H}^n $ a.e.\ $ (x,u)\in W_T $  there exist numbers $ -\infty < \kappa_1(x,u) \leq \ldots \leq \kappa_n(x,u) \leq \infty $ and vectors $ \tau_1(x,u), \ldots ,\tau_n(x,u) $ such that  $ \{\tau_1(x,u), \ldots , \tau_n(x,u), u\}  $ is a positively oriented basis of $ \mathbb{R}^{n+1} $ (i.e.\ $ \langle \tau_1(x,u) \wedge \ldots \wedge \tau_n(x,u)\wedge u , E_{n+1}\rangle = 1 $)
	and the vectors $ \zeta_1(x, u), \ldots , \zeta_n(x,u)\in \mathbb{R}^{n+1}\times \mathbb{R}^{n+1} $, defined as
	$$ \zeta_i(x,u) = (\tau_i(x,u), \kappa_i(x,u)\tau_i(x,u)) \quad \textrm{if $ \kappa_i(x,u)< \infty $} $$
	$$ \zeta_i(x,u) = (0,\tau_i(x,u)) \quad \textrm{if $ \kappa_i(x,u)= \infty $}, $$
	form an orthogonal basis of $ \Tan^n(\mathcal{H}^n \restrict W_T, (x,u)) $. The numbers $ \kappa_i(x,u) $ for $ i = 1, \ldots , n $ are uniquely determined, as well as the subspaces of $ \mathbb{R}^{n+1} $ spanned by $ \{ \tau_i(x,u): \pi_1(\zeta_i(x,u)) = \kappa_k(x,u)\tau_i(x,u)\} $ for each $ k $.
\end{theorem}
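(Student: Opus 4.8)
The plan is to reduce the statement to the normal form of a Lagrangian subspace of a symplectic vector space, after extracting from the three defining properties of a Legendrian cycle the correct pointwise information on the approximate tangent planes of $W_T$. First I would record that, since $T$ is integer-multiplicity rectifiable, for $\mathcal{H}^n$ a.e.\ $(x,u)\in W_T$ the set $\Tan^n(\mathcal{H}^n\restrict W_T,(x,u))$ is an $n$-plane, spanned by a simple unit $n$-vector $\eta(x,u)$ (the orientation of $T$). From $\spt T\subseteq\mathbb{R}^{n+1}\times\mathbb{S}^n$ one gets $\Tan^n(\mathcal{H}^n\restrict W_T,(x,u))\subseteq\mathbb{R}^{n+1}\times u^\perp$ at a.e.\ such point. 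From $T\restrict\alpha_{\mathbb{R}^{n+1}}=0$, together with the rectifiable representation of $T$, one gets that $\alpha_{\mathbb{R}^{n+1}}(x,u)$ annihilates $\eta(x,u)$ for $\mathcal{H}^n$ a.e.\ $(x,u)$; since $\eta(x,u)$ is simple, this is equivalent to $\Tan^n(\mathcal{H}^n\restrict W_T,(x,u))\subseteq\ker\alpha_{\mathbb{R}^{n+1}}(x,u)=\{(y,v):y\bullet u=0\}$. Finally, since $\partial T=0$, the current identity $\partial(T\restrict\alpha_{\mathbb{R}^{n+1}})=\pm\big((\partial T)\restrict\alpha_{\mathbb{R}^{n+1}}-T\restrict d\alpha_{\mathbb{R}^{n+1}}\big)$ forces $T\restrict d\alpha_{\mathbb{R}^{n+1}}=0$; by the same argument (now for a $2$-form and a simple $n$-vector) this gives that $d\alpha_{\mathbb{R}^{n+1}}(x,u)$ vanishes on $\Tan^n(\mathcal{H}^n\restrict W_T,(x,u))$ for a.e.\ $(x,u)$.

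Next, fixing such a point $(x,u)$ and writing $L:=\Tan^n(\mathcal{H}^n\restrict W_T,(x,u))$, the previous step gives $L\subseteq(\mathbb{R}^{n+1}\times u^\perp)\cap\{(y,v):y\bullet u=0\}=u^\perp\times u^\perp$. Using $\alpha_{\mathbb{R}^{n+1}}=\sum_j u_j\,dx_j$ in the canonical coordinates one has $d\alpha_{\mathbb{R}^{n+1}}=\sum_j du_j\wedge dx_j$, whose restriction to $u^\perp\times u^\perp$ (in orthonormal coordinates with $u=e_{n+1}$) is the standard symplectic form $\sum_{j=1}^n du_j\wedge dx_j$; hence $L$ is a Lagrangian subspace of the $2n$-dimensional symplectic space $u^\perp\times u^\perp$. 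I would then invoke the normal form of Lagrangian subspaces: setting $V:=\pi_0(L)\subseteq u^\perp$ and $m:=\dim V$, the Lagrangian condition yields $L\cap(\{0\}\times u^\perp)=\{0\}\times V^\perp$ (orthogonal complement inside $u^\perp$) and a well-defined \emph{symmetric} endomorphism $S\colon V\to V$ with $(v,Sv)\in L$ for all $v\in V$ (take $(v,w)\in L$, put $Sv=\mathrm{proj}_V(w)$; symmetry and independence of the choice of $w$ follow from $d\alpha_{\mathbb{R}^{n+1}}((v,w),(v',w'))=0$), and $L=\{(v,Sv+\psi):v\in V,\ \psi\in V^\perp\}$. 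Diagonalising $S$ by an orthonormal eigenbasis $\tau_1,\dots,\tau_m$ with eigenvalues $\kappa_1(x,u)\le\dots\le\kappa_m(x,u)$, completing to an orthonormal basis $\tau_{m+1},\dots,\tau_n$ of $V^\perp$, and setting $\kappa_i(x,u)=\infty$ for $i>m$, the vectors $\zeta_i(x,u)$ defined in the statement lie in $L$ (for $i\le m$ using $\{0\}\times V^\perp\subseteq L$ to cancel the $V^\perp$-part), are linearly independent, hence a basis of $L$, and satisfy $\zeta_i\bullet\zeta_j=(1+\kappa_i(x,u)^2)\delta_{ij}$ when $\kappa_i<\infty$, $\zeta_i\bullet\zeta_j=\delta_{ij}$ when $\kappa_i=\infty$, and $\zeta_i\bullet\zeta_j=0$ for mixed $i\neq j$; so they form an orthogonal basis of $L$.

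For the orientation normalisation, since $\tau_1,\dots,\tau_n$ is orthonormal in $u^\perp$ and $|u|=1$ with $u\perp u^\perp$, the $(n+1)$-vector $\tau_1\wedge\dots\wedge\tau_n\wedge u$ is already a unit vector, and after replacing one $\tau_i$ by $-\tau_i$ if necessary (which only reorients $\zeta_i$, not the span $L$) I may assume $\langle\tau_1\wedge\dots\wedge\tau_n\wedge u,E_{n+1}\rangle=1$. For uniqueness: $V=\pi_0(L)$ and the symmetric operator $S$ depend only on $L$ and $u$, so the multiset of eigenvalues of $S$ — together with the number $n-m$ of indices at which $\kappa_i=\infty$ — is intrinsic, which gives uniqueness of $\kappa_1(x,u)\le\dots\le\kappa_n(x,u)$; likewise each eigenspace of $S$ and the subspace $V^\perp$ are intrinsic, and these are exactly the subspaces spanned by $\{\tau_i(x,u):\pi_1(\zeta_i(x,u))=\kappa_k(x,u)\tau_i(x,u)\}$, giving the remaining uniqueness claim. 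Assembling the construction at a.e.\ point yields the statement.

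The main obstacle is the first step: turning the three current-theoretic identities into the pointwise facts that the approximate tangent plane lies in $\ker\alpha_{\mathbb{R}^{n+1}}(x,u)\cap(\mathbb{R}^{n+1}\times u^\perp)$ and is $d\alpha_{\mathbb{R}^{n+1}}(x,u)$-isotropic. This rests on the a.e.\ existence of tangent planes and orientations for integer-multiplicity rectifiable currents, and on the elementary but slightly delicate fact that a simple $n$-vector $\eta$ is annihilated by a $1$-form (resp.\ a $2$-form) if and only if the $n$-plane spanned by $\eta$ is contained in its kernel (resp.\ is isotropic for it) — for the $2$-form case one expands the interior product of $\eta(x,u)$ with $d\alpha_{\mathbb{R}^{n+1}}(x,u)$ and uses linear independence of the basis $(n-2)$-vectors. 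Once these reductions are secured, the remaining steps are standard symplectic linear algebra.
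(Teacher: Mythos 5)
The paper does not prove this theorem; it is stated as a citation to Rataj--Z\"ahle, \cite[Theorem 9.2]{RatajZaehlebook}, and used as a black box. So there is no internal proof to compare against. That said, your argument is a correct standalone proof and follows what is essentially the canonical route for this result (and the route taken in the cited reference): extract from the three defining properties of a Legendrian cycle the pointwise facts that for $\mathcal{H}^n$ a.e.\ $(x,u)$ the approximate tangent plane $L$ of $W_T$ lies in $u^\perp\times u^\perp$, is in the kernel of $\alpha_{\mathbb{R}^{n+1}}(x,u)$, and is isotropic for $d\alpha_{\mathbb{R}^{n+1}}(x,u)$; conclude that $L$ is a Lagrangian $n$-plane of the symplectic vector space $(u^\perp\times u^\perp,\ d\alpha_{\mathbb{R}^{n+1}}(x,u))$; and read off the normal form via the symmetric operator $S$ on $V=\pi_0(L)$, diagonalised, with the vertical part $\{0\}\times V^\perp$ absorbed into the ``$\kappa_i=\infty$'' directions. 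The reduction to pointwise statements (simple $n$-vector annihilated by a $1$-form iff its span is in the kernel; by a $2$-form iff its span is isotropic; and $T\restrict\alpha=0$, $\partial T=0\Rightarrow T\restrict d\alpha=0$) is handled correctly, including the observation that the multiplicity $\theta\geq1$ a.e.\ lets you pass from the vanishing of the current to the vanishing of the pointwise interior product. The uniqueness discussion is also right: $V$, $S$, and hence the eigenvalues and eigenspaces, as well as $V^\perp$, are intrinsic to $L$ and $u$. Your proof is correct and, modulo the a.e.\ bookkeeping, coincides with the standard argument.
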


\begin{definition}
	Given $ T $, $ \tau_1, \ldots , \tau_n $ and $ \zeta_1, \ldots , \zeta_n $ as in Theorem \ref{theo: Rataj-Zaehle}, we define 
	$$ \mathcal{\mathcal{J}}_T(x,u) = \frac{1}{| \zeta_1(x,u)\wedge \ldots \wedge \zeta_n(x,u)|} $$
	and 
	$$ \zeta_T(x,u) = \frac{\zeta_1(x,u)\wedge \ldots \wedge \zeta_n(x,u)}{|\zeta_1(x,u)\wedge \ldots \wedge \zeta_n(x,u)|} $$
	for $ \mathcal{H}^n $ a.e.\ $ (x,u) \in W_T $.
\end{definition}

\begin{remark}
This definition is well-posed, in the sense that $ \mathcal{J}_T $ and $ \zeta_T $  do not depend, up to a set of $ \mathcal{H}^n $-measure zero, on the choice of the vectors $ \tau_1, \ldots, \tau_n $.
\end{remark}

\begin{remark}
There exists a $ \mathcal{H}^n\restrict W_T $ almost unique integer-valued map $ i_T $ such that $ i_T(x,u) =0 $ for $ \mathcal{H}^n $ a.e.\ $ (x,u)\in \mathbb{R}^{n+1}\times \mathbb{R}^{n+1} \setminus W_T $, $ i_T(x,u) > 0 $ for $ \mathcal{H}^n $ a.e.\ $(x,u)\in W_T $ and 
$$ T = (\mathcal{H}^n \restrict W_T)i_T \wedge \zeta_T. $$
\end{remark}

\begin{definition}[Lipschitz-Killing differential forms]
	For each $ k \in \{0, \ldots , n\} $ we define
	$$ \Sigma_{n,k} = \bigg\{ \sigma: \{1, \ldots , n\} \rightarrow \{0,1\}: \sum_{i=1}^n \sigma(i) = n-k \bigg\}. $$
	The \emph{$ k $-th Lipschitz-Killing differential form} $ \varphi_k : \mathbb{R}^{n+1} \times \mathbb{R}^{n+1} \rightarrow \bigwedge^n(\mathbb{R}^{n+1} \times \mathbb{R}^{n+1}) $  is defined by
	\begin{flalign*}
		&\langle \xi_1 \wedge \cdots \wedge \xi_n, \varphi_k(x,u) \rangle\\
		& \qquad = \sum_{\sigma \in \Sigma_{n,k}} \langle \pi_{\sigma(1)}(\xi_1)\wedge \cdots \wedge \pi_{\sigma(n)}(\xi_n) \wedge u, E_{n+1}\rangle,
	\end{flalign*} 
	for every $ \xi_1, \ldots , \xi_n \in \mathbb{R}^{n+1}\times \mathbb{R}^{n+1}$.
\end{definition}

\begin{definition}[Curvature measures]
	If $ T $ is a Legendrian cycle in $ \mathbb{R}^{n+1} $ and $ k \in \{0, \ldots , n \} $, then \emph{$ k $-th curvature measure}  of $ T $ is the $ 0 $-current   $$ T \restrict \varphi_k \in \mathcal{D}_0(\mathbb{R}^{n+1} \times \mathbb{R}^{n+1}). $$
\end{definition}

\begin{definition}[Principal curvatures]
Given a Legendrian cycle $ T $ of $ \mathbb{R}^{n+1} $ we define $ \kappa_{T,i} = \kappa_i $, where $ \kappa_1 \leq \ldots \leq \kappa_n $ are the functions defined $ \mathcal{H}^n $ a.e.\ on $ W_T $ given by Theorem \ref{theo: Rataj-Zaehle}. 

The numbers $ \kappa_{T,1} \leq \ldots \leq \kappa_{T,n} $ are the principal curvatures of $ T $.
\end{definition}
\begin{definition}[Mean curvature functions]
Given a Legendrian cycle $ T $ of $ \mathbb{R}^{n+1} $ we define 
$$ W_T^{(i)} = \{(x,u)\in W_T: \kappa_{T,i}(x,u)< \infty, \; \kappa_{T,i+1}(x,u) = +\infty\} \quad \textrm{for $ i = 1, \ldots , n-1 $} $$
$$ W_T^{(0)} = \{(x,u)\in W_T : \kappa_{T,1}(x,u) = +\infty\} $$
$$ W_T^{(n)} = \{(x,u)\in W_T : \kappa_{T,n}(x,u) < +\infty\}. $$

Then we define the \emph{$ r $-th mean curvature function of $T $} as 
$$ H_{T,r} =\bm{1}_{W_T^{(n-r)}} + \sum_{i=1}^r \sum_{\lambda \in \Lambda_{n-r+i,i}} \kappa_{T, \lambda(1)}\cdots \kappa_{T, \lambda(i)}\bm{1}_{W_T^{(n-r+i)}}  \qquad \textrm{for $ r =1, \ldots, n $} $$
where $ \Lambda_{n,k} $, for $ k \leq n $, is the set of all increasing functions from $ \{1, \ldots , k\} $ to $ \{1, \ldots , n\} $.  Additionally,
$$ H_{T,0} = \bm{1}_{W_T^{(n)}}. $$
\end{definition}

\begin{remark}\label{remark: jacodian of pi0}
	It follows from Theorem \ref{theo: Rataj-Zaehle} that $$ J_n^{W_T}\pi_0(x,u) = \mathcal{J}_T(x,u) >0 \quad \textrm{for $\mathcal{H}^n $ a.e.\ $ (x,u)\in W_T^{(n)} $}$$
	 and $$ J_n^{W_T}\pi_0(x,u) =0 \quad \textrm{for $ \mathcal{H}^n $ a.e.\ $(x,u)\in W_T \setminus W_T^{(n)} $}. $$
\end{remark}

The definition of mean curvature functions is clearly motivated by the following result.

\begin{lemma}\label{lem: representation curvature measures}
If $ T $ is a Legendrian cycle of $ \mathbb{R}^{n+1} $ and $ k \in \{0, \ldots, n\} $ then 
$$ (T \restrict \varphi_k)(\phi) = \int \phi(x,u)\, i_T(x,u)\, \mathcal{J}_T(x,u) \, H_{T,n-k}(x,u)\, d\mathcal{H}^n(x,u) $$
for every $ \phi \in \mathcal{D}^0(\mathbb{R}^{n+1}\times \mathbb{R}^{n+1}) $.
\end{lemma}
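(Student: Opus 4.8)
The plan is to unwind the definition of the current $T \restrict \varphi_k$ using the integer-multiplicity rectifiable representation $T = (\mathcal{H}^n\restrict W_T)\, i_T \wedge \zeta_T$ and then to compute the pointwise pairing $\langle \zeta_T(x,u), \varphi_k(x,u)\rangle$ in terms of the orthonormal/orthogonal data furnished by Theorem \ref{theo: Rataj-Zaehle}. Concretely, for $\mathcal{H}^n$ a.e.\ $(x,u)\in W_T$ fix $\tau_1,\dots,\tau_n$ and the associated $\zeta_1,\dots,\zeta_n$ as in that theorem, so that $\zeta_T = (\zeta_1\wedge\cdots\wedge\zeta_n)/|\zeta_1\wedge\cdots\wedge\zeta_n|$ and $\mathcal{J}_T = 1/|\zeta_1\wedge\cdots\wedge\zeta_n|$. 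Since $T\restrict\varphi_k$ acts on a $0$-form $\phi$ by $(T\restrict\varphi_k)(\phi) = T(\phi\,\varphi_k) = \int_{W_T} \phi(x,u)\, i_T(x,u)\,\langle \zeta_T(x,u),\varphi_k(x,u)\rangle\, d\mathcal{H}^n$, the whole statement reduces to the pointwise identity
$$ \langle \zeta_T(x,u), \varphi_k(x,u)\rangle = \mathcal{J}_T(x,u)\, H_{T,n-k}(x,u) \quad \textrm{for $\mathcal{H}^n$ a.e.\ $(x,u)\in W_T$.}$$

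The core computation is therefore evaluating $\langle \zeta_1\wedge\cdots\wedge\zeta_n, \varphi_k(x,u)\rangle$ directly from the definition of $\varphi_k$, namely as a sum over $\sigma\in\Sigma_{n,k}$ of terms $\langle \pi_{\sigma(1)}(\zeta_1)\wedge\cdots\wedge\pi_{\sigma(n)}(\zeta_n)\wedge u, E_{n+1}\rangle$. I would split into two cases. First, on $W_T^{(n)}$ all $\kappa_i$ are finite, so $\pi_0(\zeta_i) = \tau_i$ and $\pi_1(\zeta_i) = \kappa_i\tau_i$; a term indexed by $\sigma$ contributes $\big(\prod_{i:\sigma(i)=1}\kappa_i\big)\langle \tau_1\wedge\cdots\wedge\tau_n\wedge u, E_{n+1}\rangle = \prod_{i:\sigma(i)=1}\kappa_i$ using the positive orientation of $\{\tau_1,\dots,\tau_n,u\}$, and summing over $\sigma\in\Sigma_{n,k}$ (i.e.\ over the $(n-k)$-element subsets selected by $\sigma(i)=1$) gives exactly the elementary symmetric polynomial $H_{T,n-k}$ — matching the defining formula since $W_T^{(n)}$ is where the indicator term $\bm{1}_{W_T^{(n-r)}}$ with $r=n-k$ vanishes (unless $k=n$) and the full symmetric sum survives. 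Meanwhile on $W_T^{(n)}$ one has $|\zeta_1\wedge\cdots\wedge\zeta_n| = 1/\mathcal{J}_T$, so dividing by this norm yields $\mathcal{J}_T\, H_{T,n-k}$ as claimed. Second, on $W_T^{(j)}$ with $j<n$, exactly the last $n-j$ of the vectors $\zeta_i$ have the form $(0,\tau_i)$ while the first $j$ have the form $(\tau_i,\kappa_i\tau_i)$; in a term $\langle\pi_{\sigma(1)}(\zeta_1)\wedge\cdots\wedge\pi_{\sigma(n)}(\zeta_n)\wedge u,E_{n+1}\rangle$ any factor $\pi_0(\zeta_i)=0$ for $i>j$ kills the term, so only $\sigma$ with $\sigma(i)=1$ for all $i>j$ survive — forcing $n-k\geq n-j$, i.e.\ $k\leq j$, and for such $\sigma$ the remaining freedom is choosing which $j-k$ of the first $j$ indices get $\sigma(i)=1$. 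Evaluating as before and summing reproduces precisely the inner sum $\sum_{\lambda\in\Lambda_{n-r+i,i}}\kappa_{\lambda(1)}\cdots\kappa_{\lambda(i)}$ with $r=n-k$, $j = n-r+i$ (hence $i = j-k$), i.e.\ the $\bm{1}_{W_T^{(j)}}$-term in $H_{T,n-k}$; when $k>j$ all terms vanish, consistent with $H_{T,n-k}$ having no $\bm{1}_{W_T^{(j)}}$ contribution in that range, and when $k=n-r$ equals $j$ exactly we pick up the leading indicator $\bm{1}_{W_T^{(n-r)}}$. Again $|\zeta_1\wedge\cdots\wedge\zeta_n| = 1/\mathcal{J}_T$ by definition, giving the factor $\mathcal{J}_T$.

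The main obstacle, and the only place demanding care, is the bookkeeping of the combinatorial cases: correctly matching the index shifts between $\Sigma_{n,k}$, the sets $W_T^{(j)}$, and the summation range $\Lambda_{n-r+i,i}$ in the definition of $H_{T,r}$ with $r=n-k$, and handling the boundary cases $k=0$ and $k=n$ (where $H_{T,n-k}$ degenerates to a single indicator). I would also need the elementary linear-algebra fact that, because $\{\tau_1,\dots,\tau_n\}$ is orthonormal and orthogonal to $u$, wedging any reordering/selection of the $\tau_i$ with $u$ and pairing against $E_{n+1}$ gives $\pm 1$ with the sign dictated by the permutation, and that these signs reassemble (together with the antisymmetry absorbed into the wedge $\zeta_1\wedge\cdots\wedge\zeta_n$) into the unsigned symmetric polynomials — this is standard but should be remarked. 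Everything else is a direct substitution into the definitions already set up in Section \ref{section Legendrian cycles}.
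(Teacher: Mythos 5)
Your proposal is correct and follows essentially the same route as the paper's proof: both unwind $T\restrict\varphi_k$ through the decomposition $W_T = \bigcup_j W_T^{(j)}$, observe that on $W_T^{(j)}$ only $\sigma\in\Sigma_{n,k}$ with $\sigma(i)=1$ for $i>j$ survive (forcing $j\ge k$), and then identify the surviving sum with $\sum_{\lambda\in\Lambda_{j,j-k}}\prod_\ell\kappa_{\lambda(\ell)}$, i.e.\ the $\bm 1_{W_T^{(j)}}$ piece of $H_{T,n-k}$. One small simplification you could make: since each $\pi_{\sigma(i)}(\zeta_i)$ is a scalar multiple of $\tau_i$ in its original slot, no reordering of the $\tau_i$ ever occurs, so the sign-bookkeeping you flag as a worry is in fact vacuous — the pairing with $E_{n+1}$ is always $+1$ times the scalar product.
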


\begin{proof}
 For $ j \in \{0, \ldots , n\} $ we define $$ \Sigma_{n,k}^{(j)} = \{\sigma \in \Sigma_{n,k} : \sigma(i) = 1 \; \textrm{for every $ i > j $}\}. $$
Notice that $ \Sigma_{n,k}^{(j)} = \varnothing $ if $ j < k $. Moreover, 
$$ \langle \pi_{\sigma(1)}(\zeta_{1}(x,u)) \wedge \ldots \wedge \pi_{\sigma(n)}(\zeta_{n}(x,u))\wedge u, E_{n+1}\rangle =0 $$
for $ (x,u) \in W_T^{(j)} $ and $ \sigma \in \Sigma_{n,k} \setminus \Sigma_{n,k}^{(j)} $, where $ \tau_1, \ldots , \tau_n $ and $ \zeta_1, \ldots , \zeta_n $ are given as in Theorem \ref{theo: Rataj-Zaehle}. Therefore  we infer that 
\begin{flalign*}
	&(T \restrict \varphi_k)(\phi) \\
	& \quad = \sum_{j=k}^n\sum_{\sigma \in \Sigma_{n,k}^{(j)}} \int_{W_T^{(j)}}\phi(x,u)\,i_T(x,u)\, \mathcal{J}_T(x,u)\,\langle \pi_{\sigma(1)}(\zeta_{1}(x,u)) \wedge \ldots \\
	& \qquad \qquad \qquad \qquad  \ldots \wedge \pi_{\sigma(n)}(\zeta_{n}(x,u))\wedge u, E_{n+1}\rangle\, d\mathcal{H}^n(x,u).
\end{flalign*} 
Noting that $ \langle \tau_1(x,u)\wedge \ldots \wedge \tau_n(x,u)\wedge u, E_{n+1}\rangle = 1 $, we infer for $ j > k $ that
\begin{flalign*}
&\sum_{\sigma \in \Sigma_{n,k}^{(j)}} \int_{W_T^{(j)}}\phi(x,u)\, i_T(x,u)\, \mathcal{J}_T(x,u)\,\langle \pi_{\sigma(1)}(\zeta_{1}(x,u)) \wedge \ldots \\
& \qquad \qquad \qquad \qquad \ldots \wedge \pi_{\sigma(n)}(\zeta_{n}(x,u))\wedge u, E_{n+1}\rangle\, d\mathcal{H}^n(x,u) \\
& \quad\quad  = \sum_{\lambda \in \Lambda_{j,j-k}}\int_{W_T^{(j)}}\phi(x,u)\,i_T(x,u)\, \mathcal{J}_T(x,u) \prod_{\ell=1}^{j-k}\kappa_{T, \lambda(\ell)}(x,u)\, d\mathcal{H}^n(x,u),
\end{flalign*}
while for $ j = k $ we have that
\begin{flalign*}
	&\sum_{\sigma \in \Sigma_{n,k}^{(k)}} \int_{W_T^{(k)}}\phi(x,u)\,i_T(x,u)\, \mathcal{J}_T(x,u)\,\langle \pi_{\sigma(1)}(\zeta_{1}(x,u)) \wedge \ldots \\
	& \qquad \qquad \qquad \qquad \ldots  \wedge \pi_{\sigma(n)}(\zeta_{n}(x,u))\wedge u, E_{n+1}\rangle\, d\mathcal{H}^n(x,u)\\
	& \quad\quad  = \int_{W_T^{(k)}}\phi(x,u)\,i_T(x,u)\, \mathcal{J}_T(x,u) \, d\mathcal{H}^n(x,u).
\end{flalign*}
We conclude
\begin{flalign*}
&(T \restrict \varphi_k)(\phi) \\
& \quad = \sum_{i=1}^{n-k} \sum_{\lambda \in \Lambda_{k+i,i}}\int_{W_T^{(k+i)}}\phi(x,u)\,i_T(x,u)\, \mathcal{J}_T(x,u) \prod_{\ell=1}^{i}\kappa_{T, \lambda(\ell)}(x,u)\, d\mathcal{H}^n(x,u) \\
& \qquad \qquad  + \int_{W_T^{(k)}}\phi(x,u)\,i_T(x,u)\, \mathcal{J}_T(x,u) \, d\mathcal{H}^n(x,u),
\end{flalign*}  
which is the desired conclusion.
\end{proof}

We conclude this section by introducing the normal cycle of a $ C^2 $-domain. 

\begin{definition}[Normal cycle of a $C^2$-domain]
Suppose $ \Omega\subseteq \mathbb{R}^{n+1} $ is an open set with $C^2 $-boundary. Then we define 
$$ N_\Omega = (\overline{\nu}_\Omega)_{\#}((\mathcal{H}^n \restrict \partial \Omega) \wedge \star\, \nu_\Omega) \in \mathcal{D}_n(\mathbb{R}^{n+1} \times \mathbb{S}^n). $$
\end{definition}

\begin{remark}[Classical]\label{rmk: normal cycle C2 boundaries}
	Since $(\mathcal{H}^n \restrict \partial \Omega) \wedge \star\, \nu_\Omega $ is a $ n $-dimensional cycle of $ \mathbb{R}^{n+1} $, it follows that $ N_\Omega $ is a $ n $-dimensional cycle in $ \mathbb{R}^{n+1} \times \mathbb{S}^n $. Moreover, area formula for rectifiable currents (see \cite[4.1.30]{Fed69}) allows to conclude
	\begin{flalign}\label{normal cycle C2 boundaries eq1}
		N_\Omega(\psi) & = \int_{\partial \Omega}\langle {\textstyle \Lambda_n} \Der \overline{\nu}_\Omega(x)(\star \nu_\Omega(x)), \psi(\overline{\nu}_\Omega(x))\rangle\, d\mathcal{H}^n(x)\notag \\
		& =  \big[\big(\mathcal{H}^n \restrict \overline{\nu}_\Omega(\partial \Omega)\big) \wedge \eta\big](\psi), 
	\end{flalign}
	where
	$$  \eta(x,u) = \frac{{\textstyle \Lambda_n} \Der \overline{\nu}_\Omega(x)(\star \nu_\Omega(x))}{|{\textstyle \Lambda_n} \Der \overline{\nu}_\Omega(x)(\star \nu_\Omega(x))|} \quad \textrm{for $(x,u)\in \overline{\nu}_\Omega(\partial \Omega) $}. $$ It follows that $ N_\Omega \in \bm{I}_n(\mathbb{R}^{n+1}\times \mathbb{R}^{n+1}) $. We notice that 
	\begin{flalign*}
		\bm{M}(N_\Omega) = \mathcal{H}^n(\overline{\nu}_\Omega(\partial \Omega))  = \int_{\partial \Omega}J_n^{\partial \Omega}\overline{\nu}_\Omega\, d\mathcal{H}^n = \int_{\partial \Omega} \prod_{i=1}^n (1+ \kappa_{\Omega, i}^2)^{\frac{1}{2}}\, d\mathcal{H}^n,
	\end{flalign*} 
	and by arithmetic-geometric mean inequality we have that
\begin{flalign}\label{mass normal bundle}
	\bm{M}(N_\Omega) \leq 2^{\frac{n}{2}}\bigg(\mathcal{H}^n(\partial \Omega) + n^{-\frac{n}{2}}\int_{\Omega}A_\Omega(x)^n\, d\mathcal{H}^n\bigg).
	\end{flalign}
	 We check now that $ N_\Omega \restrict \alpha_{\mathbb{R}^{n+1}} =0 $: if $ x \in \partial \Omega $ and $ \tau_1, \ldots , \tau_n $ is an orthonormal basis of $ \Tan(\partial \Omega,x) $  such that $ \star \nu_\Omega = \tau_1 \wedge \ldots \wedge \tau_n $ and $ \Der \nu_\Omega(x)(\tau_i(x)) = \kappa_{\Omega, i}(x)\tau_i(x) $ for $ x \in \partial \Omega $,  then we use shuffle formula (see \cite[pag.\ 18]{Fed69}) to compute
	\begin{flalign*}
		&\langle {\textstyle \Lambda_n} \Der \overline{\nu}_\Omega(x)(\star \nu_\Omega(x)), (\psi \wedge \alpha_{\mathbb{R}^{n+1}})(\overline{\nu}_\Omega(x))\rangle \\
		& \qquad = \langle (\tau_1, \kappa_{\Omega,1}(x)\tau_1)\wedge \ldots \wedge (\tau_n, \kappa_{\Omega, n}(x)\tau_n), (\psi \wedge \alpha_{\mathbb{R}^{n+1}})(\overline{\nu}_\Omega(x)) \rangle =0
	\end{flalign*}
	for every $ \psi \in \mathcal{D}^{n-1}(\mathbb{R}^{n+1}\times \mathbb{R}^{n+1}) $. 
	We conclude that $ N_\Omega $ is a Legendrian cycle of $ \mathbb{R}^{n+1} $.
	 Additionally one easily infer that $ W_{N_\Omega}^{(n)} = W_{N_\Omega} = \overline{\nu}_\Omega(\partial \Omega) $ and  
	$$ \kappa_{N_\Omega,i}(x, u) = \kappa_{\Omega, i}(x), \quad  \mathcal{J}_{N_\Omega}(x,u) = \frac{1}{|{\textstyle \Lambda_n} \Der \overline{\nu}_\Omega(x)(\star \nu_\Omega(x))|}$$
	for every $(x,u)\in W_{N_\Omega} $.
	In particular, $ H_{N_{\Omega},k}(x,u) = H_{\Omega,k} (x) $ for  $ (x,u)\in \overline{\nu}_{\Omega}(\partial \Omega) $  and by Remark \ref{remark: jacodian of pi0}
	\begin{equation}\label{normal cycle C2 boundaries eq2}
(N_\Omega \restrict \varphi_{n-k})(\phi) = \int_{\partial \Omega}H_{\Omega, k}(x)\, \phi(x, \nu_\Omega(x))\, d\mathcal{H}^n(x)
	\end{equation}  
	for all $ \phi \in \mathcal{D}^0(\mathbb{R}^{n+1}\times \mathbb{R}^{n+1}) $ and $ k \in \{0, \ldots, n\} $.
\end{remark}

\begin{lemma}\label{lem asymptotically mean convexity}
	Let $ \Omega_\ell $ be an asymptotically $ k $-mean convex sequence such that $ N_{\Omega_\ell} $ weakly converges to a Legendrian cycle $ T $ of $ \mathbb{R}^{n+1} $. Then 
	$$ H_{T,i}(x,u) \geq 0 \quad \textrm{for $ \mathcal{H}^n $ a.e.\ $(x,u)\in W_T $ and for $ i \in \{1, \ldots , k\} $.} $$
\end{lemma}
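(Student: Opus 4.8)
The plan is to pass to the limit in the curvature‑measure identity \eqref{normal cycle C2 boundaries eq2} for the normal cycles $N_{\Omega_\ell}$, and then to read off the sign of $H_{T,i}$ from the positivity provided by asymptotic $k$-mean convexity. Fix $i \in \{1, \ldots, k\}$. By \eqref{normal cycle C2 boundaries eq2} (with $k$ there replaced by $i$ and $\Omega$ by $\Omega_\ell$), for every $\phi \in \mathcal{D}^0(\mathbb{R}^{n+1} \times \mathbb{R}^{n+1})$ we have
\[
(N_{\Omega_\ell} \restrict \varphi_{n-i})(\phi) = \int_{\partial \Omega_\ell} H_{\Omega_\ell, i}(x)\, \phi(x, \nu_{\Omega_\ell}(x))\, d\mathcal{H}^n(x).
\]
First I would observe that $N_{\Omega_\ell} \restrict \varphi_{n-i} \to T \restrict \varphi_{n-i}$ weakly as $0$-currents. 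Indeed, $\varphi_{n-i}$ has polynomial (in particular smooth) coefficients, so for each $\phi \in \mathcal{D}^0(\mathbb{R}^{n+1}\times\mathbb{R}^{n+1})$ the $n$-form $\phi \wedge \varphi_{n-i}$ belongs to $\mathcal{D}^n(\mathbb{R}^{n+1}\times\mathbb{R}^{n+1})$, hence is an admissible test form, and by definition of the contraction
\[
(N_{\Omega_\ell} \restrict \varphi_{n-i})(\phi) = N_{\Omega_\ell}(\phi \wedge \varphi_{n-i}) \longrightarrow T(\phi \wedge \varphi_{n-i}) = (T \restrict \varphi_{n-i})(\phi).
\]

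Next I would exploit the sign information coming from Definition \ref{def asymptotically k convex}. Taking $\phi \geq 0$ and splitting $H_{\Omega_\ell, i} = (H_{\Omega_\ell, i})^+ - (H_{\Omega_\ell, i})^-$ in the first displayed identity gives
\[
(N_{\Omega_\ell} \restrict \varphi_{n-i})(\phi) \geq - \Big(\sup_{\mathbb{R}^{n+1}\times\mathbb{R}^{n+1}} |\phi|\Big) \int_{\partial \Omega_\ell} (H_{\Omega_\ell, i})^- \, d\mathcal{H}^n ,
\]
and the right-hand side tends to $0$ as $\ell \to \infty$ by asymptotic $k$-mean convexity, since $1 \leq i \leq k$. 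Letting $\ell \to \infty$ yields $(T \restrict \varphi_{n-i})(\phi) \geq 0$ for every nonnegative $\phi \in \mathcal{D}^0(\mathbb{R}^{n+1}\times\mathbb{R}^{n+1})$; in other words, the $0$-current $T \restrict \varphi_{n-i}$ is represented by a nonnegative Radon measure.

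To conclude, I would invoke Lemma \ref{lem: representation curvature measures} with $k$ replaced by $n-i$, which identifies that measure explicitly:
\[
(T \restrict \varphi_{n-i})(\phi) = \int \phi(x,u)\, i_T(x,u)\, \mathcal{J}_T(x,u)\, H_{T,i}(x,u)\, d\mathcal{H}^n(x,u).
\]
By Theorem \ref{theo: Rataj-Zaehle} the vectors $\zeta_1(x,u), \ldots, \zeta_n(x,u)$ form a basis of $\Tan^n(\mathcal{H}^n \restrict W_T,(x,u))$, hence are linearly independent, so $\mathcal{J}_T(x,u) > 0$ for $\mathcal{H}^n$-a.e.\ $(x,u) \in W_T$; moreover $i_T(x,u) > 0$ for $\mathcal{H}^n$-a.e.\ $(x,u) \in W_T$ by construction of $i_T$. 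Therefore the nonnegativity of the measure $i_T\, \mathcal{J}_T\, H_{T,i} \, (\mathcal{H}^n \restrict W_T)$ forces $H_{T,i}(x,u) \geq 0$ for $\mathcal{H}^n$-a.e.\ $(x,u) \in W_T$, and since $i \in \{1, \ldots, k\}$ was arbitrary the lemma follows. I do not expect any serious obstacle in this argument; the only points requiring a little care are the legitimacy of commuting the contraction by $\varphi_{n-i}$ with the weak limit (which rests on $\varphi_{n-i}$ having smooth coefficients, so that $\phi\wedge\varphi_{n-i}$ is a genuine test form) and the strict positivity of $i_T\mathcal{J}_T$ on $W_T$.
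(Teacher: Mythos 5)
Your proof is correct and follows essentially the same route as the paper: decompose $H_{\Omega_\ell,i}$ into positive and negative parts, use asymptotic $k$-mean convexity to kill the negative part in the limit, pass to the limit in $(N_{\Omega_\ell}\restrict\varphi_{n-i})(\phi)\to (T\restrict\varphi_{n-i})(\phi)$, and then read off the sign of $H_{T,i}$ from Lemma \ref{lem: representation curvature measures} together with the $\mathcal{H}^n$-a.e.\ positivity of $i_T\,\mathcal{J}_T$. You are merely slightly more explicit than the paper about why the contraction commutes with the weak limit (namely that $\phi\wedge\varphi_{n-i}$ is a genuine compactly supported test form), which is a correct and worthwhile remark but does not constitute a different argument.
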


\begin{proof}
Let $ i \in \{1, \ldots , k\} $. By \eqref{normal cycle C2 boundaries eq2} we have that
\begin{flalign*}
&\big(N_{\Omega_\ell}\restrict \varphi_{n-i}\big)(\phi) \\
& \qquad = -\int_{\partial \Omega_\ell} H_{\Omega_\ell, i}^-(x)\, \phi(x,\nu_{\Omega_\ell}(x))\, d\mathcal{H}^n(x)  +  \int_{\partial \Omega_\ell} H_{\Omega_\ell, i}^+(x)\, \phi(x,\nu_{\Omega_\ell}(x))\, d\mathcal{H}^n(x)
\end{flalign*}  
for  $ \phi \in \mathcal{D}^0(\mathbb{R}^{n+1}\times \mathbb{R}^{n+1}) $, whence we infer
$$ (T \restrict \varphi_{n-i})(\phi) = \lim_{i \to \infty} \int_{\partial \Omega_\ell} H_{\Omega_\ell, i}^+(x)\, \phi(x,\nu_{\Omega_\ell}(x))\, d\mathcal{H}^n(x) \geq 0 $$
for $ \phi \in \mathcal{D}^0(\mathbb{R}^{n+1}\times \mathbb{R}^{n+1}) $ with $ \phi \geq 0 $. Now we obtain the desired conclusion from Lemma \ref{lem: representation curvature measures}.
\end{proof}

\begin{lemma}[Minkowski-Hsiung identities]\label{lem: Minkowski formulae}
	Suppose $\Omega_\ell \subseteq \mathbb{R}^{n+1}$ is a compactly supported sequence of open sets with $ C^2 $-boundary and $ T $ is a Legendrian cycle of $ \mathbb{R}^{n+1} $ such that $N_{\Omega_\ell} \rightarrow T $ weakly.
	 
	Then the Minkowski identity holds for $ T $:
	\begin{flalign}\label{lem: Minkowski formulaeI}
		& (n-k+1)\int i_T(x,u)\, \mathcal{J}_T(x,u)\, H_{T,k-1}(x,u)\, d\mathcal{H}^n(x,u) \notag \\
		& \qquad  \qquad  = k \int (x \bullet u)i_T(x,u)\, \mathcal{J}_T(x,u)\, H_{T,k}(x,u)\, d\mathcal{H}^n(x,u)
	\end{flalign} 
for $ k = 1,\ldots , n $.	Additionally, if $ \Omega_\ell $ converges in measure to a set $\Omega$, then
	\begin{equation}\label{lem: Minkowski formulaeII}
		(n+1)\mathcal{L}^{n+1}(\Omega) = \int (x \bullet u)i_T(x,u)\, \mathcal{J}_T(x,u)\, H_{T,0}(x,u)\, d\mathcal{H}^n(x,u).
	\end{equation}  
\end{lemma}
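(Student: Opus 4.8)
The strategy is to establish the Minkowski identities for the classical normal cycles $N_{\Omega_\ell}$ first, then pass to the weak limit. Recall that for a bounded $C^2$-domain $\Omega$ the classical Minkowski–Hsiung identities read
\begin{equation*}
	(n-k+1)\int_{\partial\Omega} H_{\Omega,k-1}\,d\mathcal{H}^n = k\int_{\partial\Omega}(x\bullet\nu_\Omega(x))\,H_{\Omega,k}(x)\,d\mathcal{H}^n(x),\qquad k=1,\ldots,n,
\end{equation*}
together with the volume identity $(n+1)\mathcal{L}^{n+1}(\Omega)=\int_{\partial\Omega}(x\bullet\nu_\Omega(x))\,d\mathcal{H}^n$ (the divergence theorem applied to the position field, which is precisely the $k=0$ case). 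First I would rewrite each of these identities in the language of curvature measures of the normal cycle: using Remark \ref{rmk: normal cycle C2 boundaries}, and in particular \eqref{normal cycle C2 boundaries eq2}, the quantities $\int_{\partial\Omega}H_{\Omega,j}\,\phi(x,\nu_\Omega(x))\,d\mathcal{H}^n$ are exactly $(N_\Omega\restrict\varphi_{n-j})(\phi)$, and by Lemma \ref{lem: representation curvature measures} these in turn equal $\int\phi\,i_{N_\Omega}\mathcal{J}_{N_\Omega}H_{N_\Omega,j}\,d\mathcal{H}^n$. So the target formulae \eqref{lem: Minkowski formulaeI} and \eqref{lem: Minkowski formulaeII} for a general Legendrian cycle $T$ are simply the curvature-measure form of the classical identities, tested against a suitable $\phi$.

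The key point is the choice of test form. The factor $x\bullet u$ on the right-hand side is the pairing with the contact $1$-form evaluated at the position vector: $\langle(x,0),\alpha_{\mathbb{R}^{n+1}}(x,u)\rangle = x\bullet u$. So I would work with the smooth function $w(x,u)=x\bullet u$ on $\mathbb{R}^{n+1}\times\mathbb{R}^{n+1}$, multiplied by a fixed cutoff $\chi\in C^\infty_c(\mathbb{R}^{n+1}\times\mathbb{R}^{n+1})$ that equals $1$ on a neighborhood of $B_R\times\mathbb{S}^n$, where $B_R$ is the common ball containing all $\Omega_\ell$ (here the compact support hypothesis is used). Since $\spt N_{\Omega_\ell}\subseteq \overline\Omega_\ell\times\mathbb{S}^n\subseteq B_R\times\mathbb{S}^n$ and $\spt T$ is likewise contained in $B_R\times\mathbb{S}^n$ by weak convergence and lower semicontinuity of support, the cutoff $\chi$ does not affect any of the integrals. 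For the classical identities I would either quote \cite{Hsiung} directly, or — more in the spirit of the normal-cycle formalism — derive them by applying $\partial N_\Omega=0$ to the $(n-1)$-form $w\,\iota_X(\varphi_{n-k}/\binom{n}{k}\text{-type})$ and using the Legendrian property $N_\Omega\restrict\alpha_{\mathbb{R}^{n+1}}=0$; the latter route has the advantage of making the passage to the limit transparent, since $\partial T=0$ and $T\restrict\alpha_{\mathbb{R}^{n+1}}=0$ hold for the limit cycle as well (weak limits of cycles are cycles, and the Legendrian condition is preserved under weak convergence of currents with uniformly bounded mass supported in $\mathbb{R}^{n+1}\times\mathbb{S}^n$).

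Concretely: for each $\ell$, write the classical identity for $\Omega_\ell$ as
\begin{equation*}
	(n-k+1)\,(N_{\Omega_\ell}\restrict\varphi_{n-k+1})(\chi) = k\,(N_{\Omega_\ell}\restrict\varphi_{n-k})(w\chi),
\end{equation*}
which follows from \eqref{normal cycle C2 boundaries eq2} and the classical Minkowski formula since $H_{N_{\Omega_\ell},j}(x,u)=H_{\Omega_\ell,j}(x)$ on $\overline\nu_{\Omega_\ell}(\partial\Omega_\ell)=W_{N_{\Omega_\ell}}$ and $\chi\equiv 1$, $w\chi(x,\nu)=x\bullet\nu$ there. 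Now pass $\ell\to\infty$: by definition of weak convergence of currents, $(N_{\Omega_\ell}\restrict\varphi_j)(\psi)=N_{\Omega_\ell}(\psi\wedge\varphi_j)\to T(\psi\wedge\varphi_j)=(T\restrict\varphi_j)(\psi)$ for every smooth $\psi$ with compact support — here $\psi\in\{\chi, w\chi\}$ are $0$-forms, and $\psi\wedge\varphi_j$ is a compactly supported smooth $n$-form because $\chi$ has compact support. Hence
\begin{equation*}
	(n-k+1)\,(T\restrict\varphi_{n-k+1})(\chi) = k\,(T\restrict\varphi_{n-k})(w\chi),
\end{equation*}
and applying Lemma \ref{lem: representation curvature measures} to both sides (and dropping the now-irrelevant $\chi$ since $\spt T\subseteq B_R\times\mathbb{S}^n$) gives exactly \eqref{lem: Minkowski formulaeI}. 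The same argument with $k=0$ — using $N_{\Omega_\ell}\restrict\varphi_n$ and the volume identity, together with the hypothesis $\Omega_\ell\to\Omega$ in measure so that $\mathcal{L}^{n+1}(\Omega_\ell)\to\mathcal{L}^{n+1}(\Omega)$ — yields \eqref{lem: Minkowski formulaeII}.

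**Main obstacle.** The only genuinely delicate point is the interchange of limit and integral on the \emph{right-hand side} of the classical Minkowski identity for finite $\ell$: that step is already done at the level of currents (it is just weak convergence tested against the fixed compactly supported form $w\chi\wedge\varphi_{n-k}$), so there is no issue with uniform integrability — the uniform mass bound $\sup_\ell\bm{M}(N_{\Omega_\ell})<\infty$ from \eqref{mass normal bundle} and the finite-total-curvature hypothesis is exactly what guarantees the weak limit $T$ exists in the first place and that the pairings converge. The one thing requiring a line of justification is that $\chi$ can be removed from the limit identity, i.e.\ that $\spt T\subseteq B_R\times\mathbb{S}^n$; this follows because $N_{\Omega_\ell}$ are supported in the fixed compact set $\overline{B_R}\times\mathbb{S}^n$ and weak convergence cannot enlarge the support beyond the closure of $\bigcup_\ell\spt N_{\Omega_\ell}$. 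I would also note that the classical Minkowski–Hsiung identities, while standard, should be stated carefully for $C^2$ (not $C^\infty$) boundaries — but they are proved by integrating a divergence identity for the Newton tensors, which is valid at the $C^2$ regularity level.
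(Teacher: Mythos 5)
Your proof is correct and follows essentially the same route as the paper's: quote the classical Minkowski--Hsiung identities, rewrite them via \eqref{normal cycle C2 boundaries eq2} as $(n-k+1)N_{\Omega_\ell}(\phi\,\varphi_{n-k+1})=kN_{\Omega_\ell}(\sigma\phi\,\varphi_{n-k})$ with a fixed cutoff $\phi$ (allowed by the compact-support hypothesis), pass to the weak limit of currents against the compactly supported test forms, and read off the conclusion from Lemma \ref{lem: representation curvature measures}. The alternative derivation you sketch (from $\partial N_\Omega=0$ and the Legendrian condition) is not needed and is not what the paper does, but the route you actually carry out matches the paper.
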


\begin{proof}
Let  $ R > 0 $ such that $ \Omega_\ell \subseteq B_R $ for all $ \ell \geq 1 $. Define $ \sigma : \mathbb{R}^{n+1}\times \mathbb{S}^n \rightarrow \mathbb{R} $  as $ \sigma(x,u) = x \bullet u $ for $ (x,u)\in \mathbb{R}^{n+1}\times \mathbb{S}^n $ and choose  $ \phi \in\mathcal{D}^0(\mathbb{R}^{n+1}\times \mathbb{R}^{n+1}) $ such that $ \phi(x,u) =1 $ for all $ (x,u) \in B_{2R} $. The classical Minkowski-Hsiung identities for $ C^2 $-domains and \eqref{normal cycle C2 boundaries eq2} imply
	\begin{flalign*}
		 (n-k+1)N_{\Omega_\ell}(\phi\,\varphi_{n-k+1}) & = (n-k+1) \int_{\partial \Omega_\ell} H_{\Omega_\ell,k-1}(x)\, d\mathcal{H}^n(x)\notag \\
		& = k \int_{\partial \Omega_\ell}(x \bullet \nu_{\Omega_\ell}(x) )  H_{\Omega_\ell, k}(x)\, d\mathcal{H}^n(x) \notag  \\
		& = k N_{\Omega_\ell}(\sigma\, \phi\, \varphi_{n-k}).
	\end{flalign*}
If $ \ell \to \infty $ this yields that 
$$ (n-k+1)T(\phi \,\varphi_{n-k+1})  = k T(\sigma\, \phi\, \varphi_{n-k}) $$
and  we obtain \eqref{lem: Minkowski formulaeI} from Lemma \ref{lem: representation curvature measures}. Analogously, if $ \Omega_\ell \to \Omega $ in measure, then divergence theorem and area formula imply
$$
		(n+1)\mathcal{L}^{n+1}(\Omega_\ell)  = \int_{\partial \Omega_\ell}(a \bullet \nu_{\Omega_\ell}(a)) \, d\mathcal{H}^n(a) = N_{\Omega_\ell}(\sigma\,\phi\, \varphi_n)
$$
and we pass to limit to obtain \eqref{lem: Minkowski formulaeII}.
\end{proof}


\section{Heintze-Karcher inequality for varifolds of bounded mean curvature}\label{section HK}

The next lemma provides fundamental structural properties of a set of finite perimeter whose reduced boundary is a varifold of bounded mean curvature. 

\begin{lemma}\label{lem varifolds}
	Let $ E \subseteq \mathbb{R}^{n+1} $ be a set of finite perimeter in $ \mathbb{R}^{n+1} $ and positive volume such that $ V_E $ is a varifold of bounded mean curvature. 
	
	Then there exists a closed set $ C \subseteq \mathbb{R}^{n+1} $ with non-empty interior such that 
	\begin{equation}\label{lem varifolds conclusion 0}
		\spt \| V_C \| = \partial C, \qquad \mathcal{L}^{n+1}((C \setminus E) \cup (E \setminus C)) =0,
	\end{equation}
	\begin{equation}\label{lem varifolds conclusion 1}
		\mathcal{H}^n\big[\partial C \setminus \big(\partial^\ast C \cap \pi_0(\nor(C))\big)\big]=0,
	\end{equation} 
	\begin{equation}\label{lem varifolds conclusion 2}
		\mathcal{H}^n\big(\nor(\partial C)\restrict Z\big) =0 \quad \textrm{whenever $ Z \subseteq \partial C $ with $ \mathcal{H}^n(Z) =0 $,}
	\end{equation} 
	\begin{equation}\label{lem varifolds conclusion 4a}
		\nor(C)\restrict \partial^\ast C = \{(x,\nu_C(x)): x \in \partial^\ast C \cap \pi_0(\nor(C))\},
	\end{equation}
	\begin{equation}\label{lem varifolds conclusion 4b}
		\mathcal{H}^n(\nor(C) \setminus \overline{\nu}_C(\partial^\ast C)) =0,
	\end{equation}
	and 
	\begin{equation}\label{lem varifolds conclusion 3}
		\sum_{i=1}^n \kappa_{C,i}(x,u) = \bm{h}(V_C, x) \bullet u \quad \textrm{for $ \mathcal{H}^n $ a.e.\ $(x,u)\in \nor(C) $.} 
	\end{equation}
\end{lemma}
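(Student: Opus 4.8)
The plan is to realize $E$ as (a.e.\ equal to) the closure of its measure-theoretic interior, and to transfer the varifold information about $V_E$ to pointwise curvature information about the resulting closed set $C$ via the theory of the normal bundle of closed sets developed in \cite{SantilliAnnali}. First I would fix a representative of $E$ for which every point is either a Lebesgue density point or a density-zero point, and set $C = \{x : \Theta^{n+1}(E,x) > 0\}$ (equivalently the closed support of the measure $\mathcal L^{n+1}\restrict E$ once we know $E$ has no points of density in $(0,1)$ outside $\partial^\ast E$). Since $V_E$ has bounded mean curvature, the monotonicity formula and Allard-type regularity for varifolds with bounded first variation (here one uses \cite{Menne13} together with the density lower bound to get $\spt\|V_E\| = \partial C$) yield that $C$ has nonempty interior and that $\|V_C\| = \mathcal H^n\restrict\partial^\ast C$ with $\partial C \setminus \partial^\ast C$ an $\mathcal H^n$-null set. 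This is what is needed for \eqref{lem varifolds conclusion 0} and the second half of \eqref{lem varifolds conclusion 1}.

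Next I would address the rectifiability and structure of $\nor(C)$. The set $C$ is a closed set, so $\nor(C)$ is automatically countably $\mathcal H^n$-rectifiable by \cite{SantilliAnnali}. The containment $\partial^\ast C \cap \pi_0(\nor(C))$ covering $\mathcal H^n$-almost all of $\partial C$ in \eqref{lem varifolds conclusion 1} I would get from the fact that at an $\mathcal H^n$-typical reduced-boundary point the varifold has a tangent plane, hence the set admits a one-sided touching ball there in a measure-theoretic sense; more precisely, one uses that $V_C$ being a bounded-mean-curvature integral varifold forces (via \cite{SantilliBulletin} or the blow-up analysis in \cite{SantilliAnnali}) that $\mathcal H^n$-a.e.\ point of $\partial^\ast C$ is the base point of some $(x,u)\in\nor(C)$. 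For \eqref{lem varifolds conclusion 2} — that $\nor(C)$ does not charge, via $\pi_0$, $\mathcal H^n$-null subsets of $\partial C$ — the key is a coarea/Lipschitz-decomposition argument: on the part of $\nor(C)$ where $J_n^{\nor(C)}\pi_0 > 0$, the projection is locally bi-Lipschitz onto a rectifiable piece of $\partial C$ and cannot blow up a null set; on the part where $J_n^{\nor(C)}\pi_0 = 0$ (the "infinite curvature" directions) one argues that this part itself, after projecting, still has $\mathcal H^n$-null image only if the original null set is null, using Lemma \ref{lem: Santilli20} to describe $\Tan^n(\mathcal H^n\restrict\nor(C),\cdot)$ and the fact that the fiber over each base point is at most $(n{-}\dim T_C)$-dimensional — here again the absence of interior density-$(0,1)$ points and the $C^1$-rectifiability of $\partial^\ast C$ are used. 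Statement \eqref{lem varifolds conclusion 2} will be used to rule out concentration of normal-cycle mass on the "bad" set where $\partial C \ne \partial^\ast C$.

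The identifications \eqref{lem varifolds conclusion 4a} and \eqref{lem varifolds conclusion 4b} then follow: at a reduced-boundary point the approximate tangent plane is unique, so any $(x,u)\in\nor(C)$ with $x\in\partial^\ast C$ must have $u=\nu_C(x)$ (the touching direction is forced to be the normal to the unique tangent plane), giving the inclusion "$\subseteq$" in \eqref{lem varifolds conclusion 4a}; the reverse inclusion is the statement that each $x\in\partial^\ast C\cap\pi_0(\nor(C))$ contributes exactly the pair $(x,\nu_C(x))$. Then \eqref{lem varifolds conclusion 4b} is the combination of \eqref{lem varifolds conclusion 1}, \eqref{lem varifolds conclusion 2} and \eqref{lem varifolds conclusion 4a}: the part of $\nor(C)$ over $\partial C\setminus\partial^\ast C$ is $\mathcal H^n$-null by \eqref{lem varifolds conclusion 2} since that base set is $\mathcal H^n$-null by \eqref{lem varifolds conclusion 1}. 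Finally, for the curvature identity \eqref{lem varifolds conclusion 3}, I would compute the first variation $\delta V_C$ in two ways: on one hand it is $\int \bm h(V_C,x)\bullet X\, d\|V_C\|$ by hypothesis; on the other, using \eqref{first variation eq1} together with the approximate differentiability of $\nu_C$ on $\partial^\ast C$ — which, via Lemma \ref{lem: Santilli20II} applied to $C^2$-hypersurfaces covering $\mathcal H^n$-almost all of $\partial^\ast C$ (a Lusin-type covering), identifies the approximate second fundamental form with the $\kappa_{C,i}$ — the divergence-theorem integrand produces $\sum_i\kappa_{C,i}(x,\nu_C(x))$ as the scalar mean curvature, so comparing gives $\sum_i\kappa_{C,i}(x,u) = \bm h(V_C,x)\bullet u$ for $\mathcal H^n$-a.e.\ $(x,u)\in\overline\nu_C(\partial^\ast C)$, hence $\mathcal H^n$-a.e.\ on $\nor(C)$ by \eqref{lem varifolds conclusion 4b}.

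I expect the main obstacle to be \eqref{lem varifolds conclusion 2}: controlling $\nor(C)$ over $\mathcal H^n$-null subsets of $\partial C$ when $C$ is merely a closed set with $V_C$ of bounded mean curvature, since in general (as the excerpt itself warns) $\nor(C)$ need not even have locally finite $\mathcal H^n$-measure. The resolution must exploit that $C$ here is not arbitrary — it comes from a finite-perimeter set with bounded-mean-curvature reduced boundary — so one leverages \cite{SantilliBulletin} and \cite{Menne13} to get enough regularity (a density lower bound, a.e.\ existence of touching balls, $\mathcal H^n$-rectifiability of $\partial C$ with $\partial C = \spt\|V_C\|$) that the pathological examples are excluded, and then a careful coarea argument along $\pi_0|_{\nor(C)}$ closes the gap.
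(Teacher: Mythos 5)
Your overall plan matches the paper's: fix a good closed representative $C$, use fine-structure results for bounded-mean-curvature varifolds to show the normal bundle lies over $\partial^\ast C$, and identify $\kappa_{C,i}$ with classical principal curvatures via a Lusin-type $C^2$-covering (Menne) and Lemma \ref{lem: Santilli20II}. But there are two genuine gaps. First, for \eqref{lem varifolds conclusion 1} you assert that ``the varifold has a tangent plane, hence the set admits a one-sided touching ball,'' attributing this to \cite{SantilliBulletin} or the blow-up analysis of \cite{SantilliAnnali}. This inference is false as stated: a tangent plane is a first-order fact, while a touching ball ($(x,u)\in\nor(C)$ means $\dist(x+su,C)=s$ for some $s>0$) is a genuinely second-order condition, and a blow-up to a halfspace does not produce one. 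The paper's actual route is indispensable: Allard--Duggan $W^{2,p}$ regularity \cite{Duggan86} gives that $\partial^\ast C$ is locally a $W^{2,p}$ graph for every $p<\infty$, and then Reshetnyak's second-order differentiability theorem \cite{Reshetnyakdifferentiability} gives an osculating paraboloid at $\mathcal{L}^n$-a.e.\ point, which is what actually produces two-sided touching balls a.e. Without Duggan and Reshetnyak your argument cannot conclude $\mathcal{H}^n(\partial^\ast C\setminus\pi_0(\nor(C)))=0$.

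Second, your proposed coarea argument for \eqref{lem varifolds conclusion 2} does not close. You split $\nor(C)$ into the part with $J_n^{\nor(C)}\pi_0>0$ (harmless, bi-Lipschitz) and the part with $J_n^{\nor(C)}\pi_0=0$, but on the degenerate part the coarea formula gives no control at all: it is precisely there that a set of positive $\mathcal{H}^n$-measure in $\nor(C)$ could project into a null subset of $\partial C$, and ``the original null set is null'' is the hypothesis, not a consequence. The paper does not argue this; it simply invokes \cite[Theorem 3.8]{SantilliBulletin}, a nontrivial result established there for $(m,h)$-sets (applied to $\partial C$, noting $\nor(C)\subseteq\nor(\partial C)$), and nothing softer appears to suffice. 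You correctly flag this as ``the main obstacle'' and gesture at \cite{SantilliBulletin}, but your proposed resolution is not a proof. Relatedly, for \eqref{lem varifolds conclusion 3} the divergence theorem on $\partial^\ast C$ only produces $\bm{h}(V_C,\cdot)$ by definition of first variation, not $\sum_i\kappa_{C,i}$; the bridge is exactly the Menne $C^2$-rectifiability plus Lemma \ref{lem: Santilli20II} (which identifies $\kappa_{C,i}$ with $\kappa_{\Sigma,i}$ on the $C^2$-pieces) together with \eqref{lem varifolds conclusion 2} to discard the exceptional null sets, and you should state that chain explicitly rather than appeal to a direct integration by parts.
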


\begin{proof}
	Since $ V_E $ is a varifold of bounded mean curvature, it follows from \cite[Theorem 8.6]{Allard72} that $ \Theta^n(\mathcal{H}^n \restrict \partial^\ast E, \cdot) $ is an upper-semicontinuous function on $ \mathbb{R}^{n+1} $. We conclude that $\Theta^n(\mathcal{H}^n \restrict \partial^\ast E, x) \geq 1 $ for every $ x \in \overline{\partial^\ast E} $ and $ \mathcal{H}^n( \overline{\partial^\ast E} \setminus \partial^\ast E) =0 $ by standard density results. We infer from \cite[Lemma 6.2]{DeRosaetall} that there exists a closed set $ C\subseteq \mathbb{R}^{n+1} $ with non-empty interior such that \eqref{lem varifolds conclusion 0} holds and
	\begin{equation}\label{lem varifolds eq2}
		\mathcal{H}^n(\partial C \setminus \partial^\ast C) =0.
	\end{equation}
For every  $ x \in \partial^\ast C $,	by Allard-Duggan regularity theorem \cite{Duggan86}, there exist an open set $ U \subseteq \mathbb{R}^{n+1} $ with $ x \in U $, an open set $ V \subseteq \mathbb{R}^n $ and  a function $ f \in W^{2,p}(V, \mathbb{R}) $ for every $ p < \infty $, such that $ U \cap  \partial^\ast C $ coincides with the graph of $ f $, up to a rotation in $\mathbb{R}^{n+1} $. We now recall Reshetnyak differentiability theorem, see \cite[Theorem 2]{Reshetnyakdifferentiability}: if $ f \in W^{2,p}(V, \mathbb{R}) $ with $ p > n $, then for $ \mathcal{L}^n $ a.e.\ $ a \in V $ there exists a polynomial function $ P_a $ of degree at most $ 2 $ such that $ P_a(a) = f(a) $ and 
	\begin{equation}\label{lem varifolds eq1}
		\lim_{b \to a}\frac{|f(b) -P_a(b)|}{|b-a|^2} =0. 
	\end{equation} 
	Denoting with $ G $ the graph of $ f $ and with $ \nu_G $ a continuous unit-normal on $ G $, we easily conclude from \eqref{lem varifolds eq1} that for $ \mathcal{H}^n $ a.e.\ $ x \in G $ there exists $ s > 0 $ such that
	$$\dist(x+s\nu_G(x), G) = s \quad \textrm{and} \quad \dist(x-s\nu_G(x), G). $$
	Henceforth, it follows that 
	$$ \mathcal{H}^n(\partial^\ast C \setminus \pi_0(\nor(C))) =0, $$
	and we obtain \eqref{lem varifolds conclusion 1} from \eqref{lem varifolds eq2}.
	
	The assertion in \eqref{lem varifolds conclusion 2} is proved in \cite[Theorem 3.8]{SantilliBulletin}\footnote{In \cite[Theorem 3.8]{SantilliBulletin} the desired conclusion is actually achieved in the more general class of $(m,h)$-sets.}.  To prove \eqref{lem varifolds conclusion 4a} we recall  from De Giorgi theorem \cite[Theorem 3.59]{AFP00} that if $ x \in \partial^\ast C $ then the sets $ \frac{C-x}{r} $ converge in measure as $ r \to 0 $ to an halfspace perpendicular to $ \nu_C(x) $. Consequently if $(x,u) \in \nor(C) $ and $ x \in \partial^\ast C $, we readily conclude $ u = \nu_C(x) $. This proves \eqref{lem varifolds conclusion 4a}, whence we infer that 
	\begin{equation}\label{lem varifolds eq5}
		\nor(C) \restrict(\partial C \setminus \partial^\ast C) = \nor(C) \setminus \overline{\nu}_C(\partial^\ast C).
	\end{equation}  
	Then \eqref{lem varifolds conclusion 4b} follows from \eqref{lem varifolds eq5}.
	
	Finally we prove \eqref{lem varifolds conclusion 3}. By the $C^2 $-rectifiability of Menne \cite{Menne13} we obtain a countable family $ F $ of embedded $ C^2 $-hypersurfaces  such that \begin{equation}\label{lem varifolds eq3}
		\mathcal{H}^n\Big(\partial C \setminus \bigcup F\Big) =0
	\end{equation} 
	and for each $ \Sigma \in F $ we have that 
	\begin{equation}\label{lem varifolds eq4}
		\bm{h}(\Sigma, x) = \bm{h}(V_C, x) \qquad \textrm{for $ \mathcal{H}^n $ a.e.\ $ x \in \Sigma \cap \partial C $.}
	\end{equation}  
	(Here $\bm{h}(\Sigma, x) $ is the mean curvature vector of $ \Sigma $.) For each $ \Sigma \in F $, if  $ R_\Sigma \subseteq \Sigma \cap \partial C $ is a set as in Lemma \ref{lem: Santilli20II} and $ R'_\Sigma  $ is the set of $ x \in \Sigma \cap \partial C $ such that \eqref{lem varifolds eq4} holds, we have that $ \mathcal{H}^n\big((\Sigma \cap \partial C) \setminus (R_\Sigma\cap R'_\Sigma)\big) =0 $ and it follows from \eqref{lem varifolds conclusion 2} that 
	$$ \mathcal{H}^n\big[\nor(C) \restrict\big((\Sigma \cap \partial C) \setminus (R_\Sigma\cap R'_\Sigma)\big)\big] = 0; $$
	therefore we can conclude that 
	$$ \sum_{i=1}^n\kappa_{C,i}(x,u) = \sum_{i=1}^n\kappa_{\Sigma,i}(x,u) = \bm{h}(\Sigma, x) \bullet u = \bm{h}(V_C,x)\bullet u $$
	for $ \mathcal{H}^n $ a.e.\ $(x,u) \in \nor(C) \restrict (\Sigma \cap \partial C) $ and for $ 1 \leq i \leq n $. Now \eqref{lem varifolds conclusion 3} follows from \eqref{lem varifolds eq3}.
\end{proof}

We can now state and prove the Heintze-Karcher inequality for sets of finite perimeter and bounded distributional mean curvature.

\begin{theorem}\label{HK}
Let $ E \subseteq \mathbb{R}^{n+1} $ be a set of finite perimeter and finite volume such that $ V_E $ is a varifold of bounded mean curvature and $$ \bm{h}(V_E,x)\bullet \nu_E(x)\geq 0 \quad \textrm{for $ \mathcal{H}^n $ a.e.\ $ x \in \partial^\ast E $.} $$ 

Then 
\begin{equation}\label{HKeq}
	\mathcal{L}^{n+1}(E) \leq \frac{n}{n+1} \int_{\partial^\ast E}\frac{1}{\bm{h}(V_E,x) \bullet \nu_E(x)}\, d\mathcal{H}^n(x),
\end{equation} 
and the equality is achieved if and only if $ E $ is $ \mathcal{L}^{n+1} $-almost equal to a finite union of closed ball with disjointed interiors.
\end{theorem}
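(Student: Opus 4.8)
The strategy is to reduce Theorem \ref{HK} to the Heintze--Karcher inequality for closed sets, Theorem \ref{HK general}, by using Lemma \ref{lem varifolds} to replace $E$ by a closed set on which the bounded-mean-curvature hypothesis becomes a pointwise mean-convexity condition. First, apply Lemma \ref{lem varifolds} to $E$ to obtain a closed set $C$ with non-empty interior satisfying \eqref{lem varifolds conclusion 0}--\eqref{lem varifolds conclusion 3}. Then $V_C=V_E$, $\nu_C=\nu_E$ on $\partial^\ast C=\partial^\ast E$, and since $\partial C$ agrees with $\partial^\ast C$ up to an $\mathcal{H}^n$-null set and has finite $\mathcal{H}^n$-measure, $\mathcal{L}^{n+1}(\partial C)=0$, so $\mathcal{L}^{n+1}(\mathrm{interior}(C))=\mathcal{L}^{n+1}(C)=\mathcal{L}^{n+1}(E)$. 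Moreover $\spt\|V_C\|=\partial C$ is bounded: by the monotonicity formula for the varifold $V_E$ of bounded mean curvature one has $\|V_E\|(B_r(x))\geq c\,r^n$ for $x\in\spt\|V_E\|$ and $r\leq 1$, which together with $\|V_E\|(\mathbb{R}^{n+1})=\mathcal{H}^n(\partial^\ast E)<\infty$ forces $\spt\|V_E\|$ to be bounded, whence $C$ is bounded since $\mathrm{interior}(C)$ has finite volume. Thus $C$ is admissible in Theorem \ref{HK general}.

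Next, put $K=\mathbb{R}^{n+1}\setminus\mathrm{interior}(C)$, so that $\partial K=\partial C$, $\partial^\ast K=\partial^\ast C$ and $\nu_K=-\nu_C$; the core step is to verify $\sum_{i=1}^n\kappa_{K,i}\leq 0$ on $\nor(K)$ and to identify $-\sum_i\kappa_{K,i}$ with $\bm{h}(V_E,\cdot)\bullet\nu_E$ along $\nor(K)$. I would first observe that $\nor(K)\subseteq\nor(\partial K)=\nor(\partial C)$ (if $(x,u)\in\nor(K)$ then $x\in\partial K$ and the touching ball lies in $\mathrm{interior}(C)$, so $\dist(x+su,\partial K)=s$), whence \eqref{lem varifolds conclusion 2} gives $\mathcal{H}^n(\nor(K)\restrict(\partial C\setminus\partial^\ast C))=0$. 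Near a point of $\partial^\ast C$ the set $C$ is one side of a $W^{2,p}$ graph, so there the interior touching-ball direction is unique and equal to $-\nu_C(x)$, and the Reshetnyak expansion used in the proof of Lemma \ref{lem varifolds} shows that $\mathcal{H}^n$-a.e.\ point of $\partial^\ast C$ admits such a ball; thus, modulo an $\mathcal{H}^n$-null set, $\nor(K)$ is the graph $\{(x,-\nu_C(x)):x\in\partial^\ast C\}$, and $\pi_0\restrict\nor(K)$ is $\mathcal{H}^n$-a.e.\ injective with image $\mathcal{H}^n$-equivalent to $\partial^\ast C$. Applying Menne's $C^2$-rectifiability to $\partial C$ and Lemma \ref{lem: Santilli20II} to both $C$ and $K$ along a common family of $C^2$-hypersurfaces $\Sigma$ (legitimate since $\partial K=\partial C$), and using that reversing the unit normal of a $C^2$-hypersurface reverses the sign of every principal curvature, one obtains for $\mathcal{H}^n$-a.e.\ $(x,u)\in\nor(K)$ (so $u=-\nu_C(x)$, and all $\kappa_{C,i},\kappa_{K,i}$ finite)
\begin{equation*}
	\textstyle\sum_{i=1}^n\kappa_{K,i}(x,u)=\sum_{i=1}^n\kappa_{\Sigma,i}(x,-\nu_C(x))=-\sum_{i=1}^n\kappa_{\Sigma,i}(x,\nu_C(x))=-\sum_{i=1}^n\kappa_{C,i}(x,\nu_C(x))=-\bm{h}(V_E,x)\bullet\nu_E(x)\leq 0,
\end{equation*}
the penultimate equality coming from \eqref{lem varifolds conclusion 3}. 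Hence $C$ satisfies the hypotheses of Theorem \ref{HK general}.

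Applying Theorem \ref{HK general} gives $(n+1)\mathcal{L}^{n+1}(\mathrm{interior}(C))\leq\int_{\nor(K)}J_n^{\nor(K)}\pi_0(x,u)\,\tfrac{n}{|\sum_{i=1}^n\kappa_{K,i}(x,u)|}\,d\mathcal{H}^n(x,u)$, and the area formula for the $\mathcal{H}^n$-a.e.\ injective map $\pi_0\restrict\nor(K)$ (whose domain carries a $\sigma$-finite $\mathcal{H}^n$-measure), combined with the previous step, rewrites the right-hand side as $n\int_{\partial^\ast C}(\bm{h}(V_E,x)\bullet\nu_E(x))^{-1}\,d\mathcal{H}^n(x)$; together with $\mathcal{L}^{n+1}(\mathrm{interior}(C))=\mathcal{L}^{n+1}(E)$ and $\partial^\ast C=\partial^\ast E$ this is precisely \eqref{HKeq}. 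If equality holds in \eqref{HKeq} then equality holds in Theorem \ref{HK general}, and since $|\sum_i\kappa_{K,i}(x,u)|=\bm{h}(V_E,x)\bullet\nu_E(x)\leq\|\bm{h}(V_E,\cdot)\|_{L^\infty(\|V_E\|)}<\infty$ the boundedness requirement in the equality part of Theorem \ref{HK general} is met, so $C$ — hence $E$, up to an $\mathcal{L}^{n+1}$-null set — is a finite union of closed balls with disjoint interiors. Conversely, if $E$ is $\mathcal{L}^{n+1}$-almost equal to a finite union $\bigcup_j B_{r_j}(p_j)$ of balls with disjoint interiors, then $\bm{h}(V_E,\cdot)\bullet\nu_E\equiv n/r_j$ on $\partial B_{r_j}(p_j)$ and, since the boundary overlaps are $\mathcal{H}^n$-null, a direct computation shows both sides of \eqref{HKeq} equal $\mathcal{L}^{n+1}(E)$. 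The main obstacle is the second step: correctly transferring the information of Lemma \ref{lem varifolds}, which is encoded on the exterior-normal bundle $\nor(C)$ and on $\partial^\ast C$, to the interior touching-ball bundle $\nor(K)$ required by Theorem \ref{HK general} — in particular showing that $\nor(K)$ is, up to $\mathcal{H}^n$-null sets, the graph of $-\nu_C$ over $\partial^\ast C$ and that $\sum_i\kappa_{K,i}=-\bm{h}(V_E,\cdot)\bullet\nu_E$ there; the remaining measure-theoretic bookkeeping (boundedness of $C$, $\sigma$-finiteness for the area formula, $\mathcal{L}^{n+1}(\partial C)=0$, the equality computation) is routine.
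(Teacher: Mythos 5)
Your proof is correct, and the overall strategy---reduce to the general Heintze--Karcher inequality of Theorem \ref{HK general} by means of Lemma \ref{lem varifolds}, then apply the area formula---is the same as the paper's, but you take a noticeably more laborious route at the key reduction step. You apply Lemma \ref{lem varifolds} to $E$ itself, obtaining conclusions about $\nor(C)$ for a closed set $C$ that is $\mathcal{L}^{n+1}$-almost equal to $E$, and then must transfer these to the interior-normal bundle $\nor(K)$ of $K=\mathbb{R}^{n+1}\setminus\mathrm{interior}(C)$. This forces you to re-run the Allard--Duggan/Reshetnyak two-sided touching-ball argument, the De Giorgi blow-up identification of the touching direction, and the Menne/Lemma \ref{lem: Santilli20II} curvature-matching argument all over again for $K$. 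The paper instead applies Lemma \ref{lem varifolds} directly to the complement $F=\mathbb{R}^{n+1}\setminus E$ (legitimate because that lemma only requires positive, not finite, volume), obtaining a closed set $K$ with $\mathcal{L}^{n+1}(K\triangle F)=0$ for which conclusions \eqref{lem varifolds conclusion 0}--\eqref{lem varifolds conclusion 3} hold verbatim with $(E,C)$ replaced by $(F,K)$; since $V_K=V_E$ and $\nu_K=-\nu_E$, one reads off $u=-\nu_E(x)$ and $\sum_i\kappa_{K,i}(x,u)=\bm{h}(V_E,x)\bullet u=-\bm{h}(V_E,x)\bullet\nu_E(x)\leq 0$ for $\mathcal{H}^n$ a.e.\ $(x,u)\in\nor(K)$ with no transfer argument at all, and then the inequality and rigidity follow from Theorem \ref{HK general} and the area formula exactly as in your final steps. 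Your transfer argument is mathematically sound, but it amounts to a second proof of the relevant parts of Lemma \ref{lem varifolds} for the complementary set, which the choice $F=\mathbb{R}^{n+1}\setminus E$ makes unnecessary. One small item you supply that the paper leaves tacit is the verification, via the monotonicity formula, that the implicit bounded set required by Theorem \ref{HK general} really is bounded.
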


\begin{proof}
We assume $ \mathcal{L}^{n+1}(E) > 0 $.	Define $ F = \mathbb{R}^{n+1} \setminus E $ and notice that $$ \partial^\ast F = \partial^\ast E \quad \textrm{and} \quad \nu_F = -\nu_E. $$ Since $ V_F = V_E $ is a varifold of bounded mean curvature, we can apply Lemma \ref{lem varifolds} to find a closed set $ K \subseteq \mathbb{R}^{n+1} $ with non empty interior such that \eqref{lem varifolds conclusion 0}-\eqref{lem varifolds conclusion 3}  hold with $ E $ and $ C $ replaced by $ F $ and $ K $, respectively. In particular, (since $ V_E = V_K $ and $ \nu_K = -\nu_E $) 
$$ \sum_{i=1}^n \kappa_{K,i}(x,u) = \bm{h}(V_E, x) \bullet u \quad \textrm{for $ \mathcal{H}^n $ a.e.\ $(x,u) \in \nor(K) $}, $$
$$ \nor(K) \restrict \partial^\ast K = \{(x, -\nu_E(x)) : x \in \partial^\ast K \cap \pi_0(\nor(K))\} $$
and $$ \mathcal{H}^n(\nor(K) \setminus (\nor(K) \restrict \partial^\ast K)) =0. $$
Henceforth,  $$\sum_{i=1}^n \kappa_{K,i}(x,u) = -\bm{h}(V_E, x) \bullet \nu_E(x) \leq 0 \quad \textrm{for $ \mathcal{H}^n $ a.e.\ $(x,u) \in \nor(K) $} $$
and it follows from Theorem \ref{HK general} and area formula that
\begin{flalign*}
	(n+1)\mathcal{L}^{n+1}(E) & \leq \int_{\nor(K)} J_n^{\nor(K)}\pi_0(x,u)\, \frac{n}{\bm{h}(V_E, x) \bullet \nu_E(x)}\, d\mathcal{H}^n(x,u)\\
	&  = \int_{\partial^\ast  E}\frac{n}{\bm{h}(V_E, x) \bullet \nu_E(x)}\, d\mathcal{H}^n(x).
\end{flalign*}  
\noindent If the equality is achieved in \eqref{HKeq}, since $ V_E $ is a varifold of bounded mean curvature, we obtain the conclusion directly from the rigidity statement in Theorem \ref{HK general}.
\end{proof}

\begin{corollary}\label{HK corollary}
Let $ E \subseteq \mathbb{R}^{n+1} $ be a set of finite perimeter with positive and finite volume such that $ V_E $ is a varifold of bounded mean curvature and
$$ \bm{h}(V_E,x) \bullet \nu_E(x) \geq \frac{n\mathcal{H}^n(\partial^\ast E)}{(n+1)\mathcal{L}^{n+1}(E)} \quad \textrm{for $ \mathcal{H}^n $ a.e.\ $x \in \partial^\ast E $.} $$

Then $ E $ is $ \mathcal{L}^{n+1} $ almost equal to a finite union of disjoint open balls of the same radius $ \rho = \frac{(n+1)\mathcal{L}^{n+1}(E)}{\mathcal{H}^n(\partial^\ast E)} $.
\end{corollary}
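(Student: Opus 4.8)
The plan is to read the corollary off the equality case of Theorem~\ref{HK}. Since $E$ is a set of finite perimeter with $0<\mathcal{L}^{n+1}(E)<\infty$, the function $\bm{1}_E$ is not $\mathcal{L}^{n+1}$-a.e.\ constant, so its perimeter $\mathcal{H}^n(\partial^\ast E)$ is positive and finite, and the number $c:=\frac{n\,\mathcal{H}^n(\partial^\ast E)}{(n+1)\,\mathcal{L}^{n+1}(E)}$ is well defined in $(0,\infty)$. The hypothesis then states that $\bm{h}(V_E,x)\bullet\nu_E(x)\ge c>0$ for $\mathcal{H}^n$-a.e.\ $x\in\partial^\ast E$; in particular the sign assumption of Theorem~\ref{HK} is met, so \eqref{HKeq} is available.

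Next I would insert the pointwise bound $1/(\bm{h}(V_E,x)\bullet\nu_E(x))\le 1/c$ into \eqref{HKeq} to obtain
\[
\mathcal{L}^{n+1}(E)\ \le\ \frac{n}{n+1}\int_{\partial^\ast E}\frac{d\mathcal{H}^n(x)}{\bm{h}(V_E,x)\bullet\nu_E(x)}\ \le\ \frac{n}{n+1}\cdot\frac{\mathcal{H}^n(\partial^\ast E)}{c}\ =\ \mathcal{L}^{n+1}(E),
\]
where the last equality is just the definition of $c$. Hence both inequalities are in fact equalities. Equality in the first one, together with the rigidity part of Theorem~\ref{HK}, shows that $E$ is $\mathcal{L}^{n+1}$-almost equal to a finite union $\bigcup_{j=1}^N \overline{B_{r_j}(p_j)}$ of closed balls with pairwise disjoint interiors. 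Equality in the second one, combined with the nonnegativity of $1/c-1/(\bm{h}(V_E,\cdot)\bullet\nu_E)$ on $\partial^\ast E$, forces $\bm{h}(V_E,x)\bullet\nu_E(x)=c$ for $\mathcal{H}^n$-a.e.\ $x\in\partial^\ast E$.

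Finally I would identify the radius. Testing the first variation identity \eqref{first variation eq1} with (a compactly supported extension of) the position vector field on a sphere $\partial B_r(p)$ gives $\bm{h}(V_{B_r(p)},x)\bullet\nu_{B_r(p)}(x)=n/r$ for every such sphere, by rotational symmetry. Since $\partial^\ast E$ agrees $\mathcal{H}^n$-a.e.\ with $\bigcup_{j}\partial B_{r_j}(p_j)$ and $\bm{h}(V_E,\cdot)\bullet\nu_E\equiv c$ there, we get $n/r_j=c$, i.e.\ $r_j = n/c = \frac{(n+1)\mathcal{L}^{n+1}(E)}{\mathcal{H}^n(\partial^\ast E)}=:\rho$ for every $j$. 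The interiors of the closed balls $\overline{B_\rho(p_j)}$ are the pairwise disjoint open balls $B_\rho(p_j)$, and each closed ball differs from its interior by an $\mathcal{L}^{n+1}$-null set; thus $E$ is $\mathcal{L}^{n+1}$-almost equal to $\bigcup_{j=1}^N B_\rho(p_j)$, a finite union of disjoint open balls of radius $\rho$, as claimed. The whole argument is routine once Theorem~\ref{HK} is granted; the only point requiring a little care is this last step, since the Heintze--Karcher equality alone holds for unions of balls of arbitrary radii --- it is the pointwise constancy of $\bm{h}(V_E,\cdot)\bullet\nu_E$, coming from the strict lower bound in the hypothesis, that pins all radii to the common value $\rho$.
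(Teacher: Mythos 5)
Your proposal is correct and follows exactly the route the paper intends: the paper itself does not spell out the argument but simply points to \cite[Corollary 5.16]{HugSantilli}, and what you wrote is precisely the standard way to run that argument. Two small remarks. First, you correctly flag that the equality case of Theorem~\ref{HK} alone does not fix the radii (a finite union of tangent balls of \emph{arbitrary} radii already saturates \eqref{HKeq}), and that it is the second equality, i.e.\ the a.e.\ constancy $\bm{h}(V_E,\cdot)\bullet\nu_E\equiv c$, that pins every radius to $n/c=\rho$; this is the one genuine subtlety and you handled it cleanly. Second, when you identify $\bm{h}(V_E,\cdot)\bullet\nu_E$ with $n/r_j$ on the $j$-th sphere, the implicit point is that $\partial^\ast E$ coincides $\mathcal{H}^n$-a.e.\ with $\bigcup_j\partial B_{r_j}(p_j)$ and that away from the finitely many tangency points the varifold $V_E$ is locally the smooth sphere, so the density-one Allard computation applies; this is exactly what your appeal to \eqref{first variation eq1} on the position field is doing, and it is fine.
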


\begin{proof}
	The result can be deduced from Theorem \ref{HK} using the same argument as in  \cite[Corollary 5.16]{HugSantilli}. We leave the details to the reader. 
\end{proof}

\section{Proof of Theorem \ref{main}}\label{section main}

Let $ \Omega \subseteq \mathbb{R}^{n+1} $  be a set of finite perimeter such that the sequence $ \Omega_\ell $ converges in measure to $ \Omega $ and  $ \mathcal{H}^n(\partial \Omega_\ell) \to \mathcal{H}^n(\partial^\ast \Omega) $. We can assume that $ \mathcal{L}^{n+1}(\Omega) > 0 $, otherwise there is nothing to prove. It follows from \cite[Propositions 1.80 and 3.13]{AFP00}) that 
\begin{equation}\label{main strict convergence}
	\mathcal{H}^n \restrict \partial \Omega_\ell \to \mathcal{H}^n \restrict \partial^\ast \Omega \quad \textrm{and} \quad \nu_{\Omega_\ell}\mathcal{H}^n \restrict \partial \Omega_\ell \to \nu_\Omega \mathcal{H}^n \restrict \partial^\ast \Omega
\end{equation} 
weakly as Radon measures.  Employing Reshetnyak theorem \cite[Theorem 2.39]{AFP00} and recalling \eqref{first variation eq1} we conclude that
	$$ \delta V_{\Omega_\ell}(g) \to \delta V_\Omega(g) \quad \textrm{for all $ g \in C^\infty_c(\mathbb{R}^{n+1}, \mathbb{R}^{n+1}) $.  }  $$
We estimate
	\begin{flalign*}
		&| \delta V_{\Omega_\ell}(g)|   \leq  \int_{\partial \Omega_\ell} (H_{\Omega_\ell,1})^+|g|\, d\mathcal{H}^n + \int_{\partial \Omega_\ell} (H_{\Omega_\ell,1})^-|g|\, d\mathcal{H}^n \\
		&    \leq  M\,\int_{\{0 \leq H_{\Omega_\ell,1} \leq M\}}  |g|\, d\mathcal{H}^n   + \int_{\{H_{\Omega_\ell,1} > M\}}H_{\Omega_\ell,1}|g|\, d\mathcal{H}^n + \int_{\partial \Omega_\ell} (H_{\Omega_\ell,1})^-|g|\, d\mathcal{H}^n \\
		&  \leq   M\,\int_{\partial \Omega_\ell}  |g|\, d\mathcal{H}^n  + \bigg(\int_{\partial \Omega_\ell}|H_{\Omega_\ell, 1}|^n\, d\mathcal{H}^n\bigg)^{\frac{1}{n}}\bigg(\int_{\{H_{\Omega_\ell,1} > M\}}|g|^{\frac{n}{n-1}} \, d\mathcal{H}^n\bigg)^{\frac{n-1}{n}} \\
		& \quad \qquad \qquad \qquad \qquad  \qquad  \qquad  \qquad  \qquad  \qquad \qquad  \qquad + \int_{\partial \Omega_\ell} (H_{\Omega_\ell,1})^-|g|\, d\mathcal{H}^n. 
	\end{flalign*}
Since the sequence has finite total curvature we have that $$ \sup_{\ell \geq 1} \int_{\partial \Omega_\ell}|H_{\Omega_\ell, 1}|^n\, d\mathcal{H}^n < \infty, $$ and  the second integral on the right-side converges to zero as $ \ell \to \infty $ thanks to \eqref{main: hp3}. Noting that also the third integral on the right-side converges to zero since the sequence is asymptotically $(k-1)$-mean convex, we infer from \eqref{main strict convergence} that
	$$ | \delta V_\Omega(g) | \leq M \int_{\partial^\ast \Omega}  |g|\, d\mathcal{H}^n \quad \textrm{for $ g \in C^\infty_c(\mathbb{R}^{n+1}, \mathbb{R}^{n+1}) $.} $$
Henceforth, $ V_\Omega $ is a varifold of bounded mean curvature. Let $ C \subseteq \mathbb{R}^{n+1} $ be the closed set given by Lemma \ref{lem varifolds} such that \mbox{$ \mathcal{L}^{n+1}\big(C \triangle \Omega \big) =0 $.} In applying the conclusion \eqref{lem varifolds conclusion 2} of Lemma \ref{lem varifolds} it is useful to notice the trivial inclusion $ \nor(C) \subseteq \nor(\partial C) $.  Additionally, it follows from \eqref{mass normal bundle} that $ \sup_{\ell \geq 1}\bm{M}(N_{\Omega_\ell}) < \infty $ and we deduce from Federer-Fleming compactness theorem, cf.\ \cite{Simonbook} that, up to subsequences, $$ N_{\Omega_\ell} \to T \quad \textrm{weakly in the sense of currents}, $$ with $ T $ being an $ n$-dimensional integer-rectifiable current compactly supported in $ \mathbb{R}^{n+1} \times \mathbb{S}^n $. Since $ \partial N_{\Omega_\ell} =0 $ and $ N_{\Omega_\ell} \restrict \alpha_{\mathbb{R}^{n+1}} =0 $, we readily infer that $ T $ is a Legendrian cycle of $ \mathbb{R}^{n+1} $.
   
We have just seen that two different kind of limits are at play here: the varifolds $ V_{\Omega_\ell} $ converging to $ V_C $ and the normal cycles $ N_{\Omega_\ell} $ converging as currents to $ T $. To prove the theorem we need to understand how these two limits are related. We start proving the following two statements:
	\begin{equation}\label{main conclusion1}
	\mathcal{H}^{n}\big(\nor(C) \setminus W_T^{(n)}\big)=0
\end{equation} 
\begin{equation}\label{main conclusion2}
H_{T,1}(x, u) = \bm{h}(V_C, x) \bullet u \qquad \textrm{for $ \mathcal{H}^n $ a.e.\ $ (x,u) \in \nor(C)  $.}
\end{equation} 
By Reshetnyak continuity theorem \cite[Theorem 2.39]{AFP00}, we have that
	\begin{equation*}
		(N_{\Omega_\ell}\restrict \varphi_n)(\phi) = \int_{\partial \Omega_\ell}\phi(a, \nu_{\Omega_\ell}(a))\, d\mathcal{H}^n(a) \rightarrow \int_{\partial^\ast C}\phi(a, \nu_C(a))\, d\mathcal{H}^n(a)
	\end{equation*} 
	for every $ \phi \in \mathcal{D}^0(\mathbb{R}^{n+1}\times \mathbb{R}^{n+1})  $. Since  $N_{\Omega_\ell}\restrict \varphi_n \to T \restrict \varphi_n $ weakly, it follows from Lemma \ref{lem: representation curvature measures} and Remark \ref{remark: jacodian of pi0} that
	\begin{flalign}\label{lem 3 eq1}
		\int_{\partial^\ast C}\phi(a, \nu_C(a))\, d\mathcal{H}^n(a) & = (T \restrict \varphi_n)(\phi) \notag \\
		& = \int_{W_T^{(n)}} J_n^{W_T}\pi_0(a,u)i_T(a,u)\phi(a,u)\, d\mathcal{H}^n(a,u)
	\end{flalign}
	for $ \phi \in \mathcal{D}^0(\mathbb{R}^{n+1}\times \mathbb{R}^{n+1}) $. Let $ R = \partial^\ast C \cap \pi_0(\nor(C)) $ and choose a $ \mathcal{H}^n $-measurable set $ Q \subseteq \nor(C) $ with finite $ \mathcal{H}^n $-measure.   We notice from \eqref{lem varifolds conclusion 1} and \eqref{lem varifolds conclusion 2} of Lemma \ref{lem varifolds}  that $\mathcal{H}^n(Q \setminus (Q \restrict R)) =0 $. The latter in combination with area formula, \eqref{lem varifolds conclusion 4a} of Lemma \ref{lem varifolds} and \eqref{lem 3 eq1} allows to conclude
	\begin{flalign*}
		& \int_{Q}J_n^{Q} \pi_0(x,u)\, \phi(x,u) \, d\mathcal{H}^n(x,u) \notag \\
		& \qquad = \int_{Q\restrict R}J_n^{Q} \pi_0(x,u)\, \phi(x,u) \, d\mathcal{H}^n(x,u) \notag\\
		& \qquad = \int_{\pi_0(Q \restrict R)} \phi(x,\nu_C(x)) \, d\mathcal{H}^n(x) \notag\\
		& \qquad \leq    \int_{W_T^{(n)}} J_n^{W_T}\pi_0(x,u)i_T(x,u)\phi(x,u)\, d\mathcal{H}^n(x,u) 
	\end{flalign*} 
	for every $ \phi \in \mathcal{D}^0(\mathbb{R}^{n+1}\times \mathbb{R}^{n+1}) $ with $ \phi \geq 0 $.  It follows that 
	\begin{flalign}\label{lem 3 eq 3}
		& \int_{Q \cap S}J_n^{Q} \pi_0(x,u) \, d\mathcal{H}^n(x,u) \notag \\
		& \qquad  \qquad  \leq \int_{W_T^{(n)}\cap S} J_n^{W_T}\pi_0(x,u)i_T(x,u)\, d\mathcal{H}^n(x,u) 
	\end{flalign} 
	for every $ \mathcal{H}^n $-measurable set $ S \subseteq \mathbb{R}^{n+1} \times \mathbb{R}^{n+1} $. We prove now that
	\begin{equation}\label{main positive jacobian}
		J_n^Q\pi_0(x,u) > 0 \quad \textrm{for $ \mathcal{H}^n $ a.e.\ $(x,u)\in Q $.}
	\end{equation}
	If $ P = \{(x,u)\in Q: J_n^Q\pi_0(x,u)> 0\} $, then 
	$$ 0 = \int_{Q \setminus P}J_n^Q\pi_0(x,u)\, d\mathcal{H}^n(x,u) = \int_{\pi_0(Q \setminus P)}\mathcal{H}^0(\pi_0^{-1}(x) \cap (Q \setminus P))\, d\mathcal{H}^n(x), $$
	whence we infer, since $\mathcal{H}^0(\pi_0^{-1}(x) \cap (Q \setminus P))\geq 1 $ for $ \mathcal{H}^n $ a.e.\ $x \in \pi_0(Q \setminus P)$, that 
	$$ \mathcal{H}^n(\pi_0(Q \setminus P)) =0. $$
	It follows from \eqref{lem varifolds conclusion 2} of Lemma \ref{lem varifolds} that $ \mathcal{H}^n(Q \setminus P) =0 $ and \eqref{main positive jacobian} is proved. Thanks to \eqref{main positive jacobian},  we can  choose $ S = (\mathbb{R}^{n+1} \times \mathbb{R}^{n+1})\setminus W_T^{(n)} $ in \eqref{lem 3 eq 3} and deduce  that $$ \mathcal{H}^n(Q \setminus W_T^{(n)}) =0. $$ Since the latter holds for every $ Q \subseteq \nor(C)  $ with finite $ \mathcal{H}^n $-measure, we obtain \eqref{main conclusion1}.
Notice that \eqref{main conclusion1} implies that $ \mathcal{H}^n(\nor(C))< \infty $. It follows that
$$ \Tan^n(\mathcal{H}^n \restrict \nor(C), (x,u)) = \Tan^n(\mathcal{H}^n \restrict W_T, (x,u)) $$ for $ \mathcal{H}^n $ a.e.\ $ (x,u) \in \nor(C) $ and, combining Theorem \ref{theo: Rataj-Zaehle} with Lemma \ref{lem: Santilli20} we conclude that
\begin{equation}\label{lem 3 eq 6}
\kappa_{C, i}(x,u) = \kappa_{T,i}(x,u)  \quad \textrm{for $ \mathcal{H}^n $ a.e.\ $(x,u) \in \nor(C) $}
\end{equation} 
for $ 1 \leq i \leq n $. Now \eqref{main conclusion2} follows from Lemma \ref{lem varifolds}.

We notice that it follows from  \eqref{normal cycle C2 boundaries eq2} that
\begin{equation*}
(N_{\Omega_\ell} \restrict \varphi_{n-k})(\phi) 
 = \lambda (N_{\Omega_\ell}\restrict \varphi_n)(\phi) + \int_{\partial \Omega_\ell} (H_{\Omega_\ell, k}(x)-\lambda)\, \phi(x,\nu_{\Omega_\ell}(x))\, d\mathcal{H}^n(x)
\end{equation*}
for $ \phi \in \mathcal{D}^0(\mathbb{R}^{n+1}\times \mathbb{R}^{n+1}) $. Since the second integral on the right side goes to $ 0 $ as $ \ell \to \infty $, we obtain 
\begin{equation*}\label{lem 2 eq 1}
	T \restrict \varphi_{n-k} = \lambda (T \restrict \varphi_n).
\end{equation*} 
Henceforth it follows from Lemma \ref{lem: representation curvature measures} 
\begin{flalign*}
&	\int_{W_T^{(n)}} \phi(x,u)\, i_T(x,u)\, \mathcal{J}_T(x,u)\, (H_{T, k}(x,u) -\lambda)\, d\mathcal{H}^n(x,u)  \\
& \qquad = \int_{W_T \setminus W_T^{(n)}} \phi(x,u)\, i_T(x,u)\, \mathcal{J}_T(x,u)\, H_{T,k}(x,u)\,d\mathcal{H}^n(x,u)
\end{flalign*}
for every $ \phi \in \mathcal{D}^0(\mathbb{R}^{n+1}\times \mathbb{R}^{n+1}) $. Since $ i_T(x,u) \mathcal{J}_T(x,u)>0 $ for $ \mathcal{H}^n$ a.e.\ $(x,u)\in W_T $, we infer 
\begin{equation}\label{main H_k 1}
	 H_{T, k}(x,u) =0 \quad \textrm{for $ \mathcal{H}^n $ a.e.\ $(x,u)\in W_T \setminus W_T^{(n)} $}
\end{equation} 
and 
\begin{equation}\label{main H_k 2}
	H_{T, k}(x,u)= \lambda \quad \textrm{for $ \mathcal{H}^n $ a.e.\ $(x,u)\in  W_T^{(n)} $.}
\end{equation}
Employing Lemma \ref{lem: Minkowski formulae} we obtain
\begin{flalign}\label{main: minkowskiI}
&	(n-k+1)\int i_T(x,u)\,\mathcal{J}_T(x,u)\, H_{T,k-1}(x,u)\,d\mathcal{H}^n(x,u) \\
& \qquad = k \int (x \bullet u)	\, i_T(x,u)\,\mathcal{J}_T(x,u)\, H_{T,k}(x,u)\,d\mathcal{H}^n(x,u) \notag \\
& \qquad = k  \lambda \int_{W_T^{(n)}} (x \bullet u)	\, i_T(x,u)\,\mathcal{J}_T(x,u)\, d\mathcal{H}^n(x,u) \notag \\
& \qquad = k \lambda (n+1)\, \mathcal{L}^{n+1}(C)\notag 
\end{flalign}
and we conclude that (notice that $ 0 <\mathcal{L}^{n+1}(C) < \infty $)
\begin{equation}\label{main: lambda}
	H_{T,k}(x,u) = \lambda = \frac{(n-k+1)T(\varphi_{n-k+1})}{k (n+1)\, \mathcal{L}^{n+1}(C)}
\end{equation}
for $ \mathcal{H}^n $ a.e.\ $(x,u)\in  W_T^{(n)} $. Since, by Lemma \ref{lem asymptotically mean convexity}, $ H_{T,k-1}(x,u)\geq 0 $ for $ \mathcal{H}^n $ a.e.\ $(x,u)\in W_T $, we conclude from \eqref{main: minkowskiI} that $ \lambda \geq 0 $.

We claim now that $ \lambda > 0 $ and we prove it by contradiction. Henceforth we assume $ \lambda =0 $ and we prove by induction that 
\begin{equation}\label{main: lambda =0}
	H_{T,j}(x,u) =0 \quad \textrm{for $ \mathcal{H}^n $ a.e.\ $ (x,u)\in W_T $}
\end{equation}  
for $ j =0, \ldots , k $. If $ j = k $ then \eqref{main: lambda =0} follows from \eqref{main H_k 1} and  \eqref{main H_k 2}.  If $ i \in \{1, \ldots , k\} $ and $ H_{T,k-i+1}(x,u) =0 $ for $ \mathcal{H}^n $ a.e.\ $(x,u)\in W_T $, we infer from Minkowski formula in Lemma \ref{lem: Minkowski formulae} that 
\begin{flalign*}
&0 =(k-i+1)\int (x\bullet u)i_T(x,u)\, \mathcal{J}_T(x,u)\, H_{T, k-i+1}(x,u)\, d\mathcal{H}^n(x,u)\\
& \qquad  = (n-k+i)\int i_T(x,u)\, \mathcal{J}_T(x,u)\, H_{T, k-i}(x,u)\, d\mathcal{H}^n(x,u).
\end{flalign*}  
Since $ H_{T, k-i}(x,u)\geq 0 $   (by Lemma \ref{lem asymptotically mean convexity}) and $ i_T(x,u)\mathcal{J}_T(x,u) > 0 $ for $ \mathcal{H}^n $ a.e.\ $(x,u)\in W_T $, we conclude that 
$$ H_{T,k-i}(x,u) =0 \quad \textrm{for $ \mathcal{H}^n $ a.e.\ $(x,u)\in W_T $.} $$
Henceforth, \eqref{main: lambda =0} is proved for all $ j \in \{0, \ldots , k\} $. It follows that  $ H_{T,0}(x,u) =0 $ for $ \mathcal{H}^n $ a.e.\ $(x,u)\in W_T $ and $ \mathcal{L}^{n+1}(\Omega) =0 $ by formula \eqref{lem: Minkowski formulaeII} of Lemma \ref{lem: Minkowski formulae}. This a contradiction, and consequently we conclude  that $ \lambda $ is positive.

Since $ H_{T,i}(x,u) \geq 0 $ for $ \mathcal{H}^n $ a.e.\ $(x,u)\in W_ T $ and for every $ 1 \leq i \leq k $, we can apply \cite[Lemma 2.2]{HugSantilli} to obtain 
\begin{equation}\label{main: lower bound}
	\frac{H_{T,1}(x,u)}{{n \choose 1}} \geq \Bigg(\frac{H_{T,2}(x,u)}{{n \choose 2}}\Bigg)^{\frac{1}{2}} \geq \ldots \geq \Bigg(\frac{H_{T,k}(x,u)}{{n \choose k}}\Bigg)^{\frac{1}{k}} \geq \Bigg( \frac{\lambda}{{n \choose k}}\Bigg)^{\frac{1}{k}}
\end{equation} 
for $ \mathcal{H}^n $ a.e.\ $(x,u)\in W_T^{(n)} $. Noting by \eqref{main conclusion1} and Remark \ref{remark: jacodian of pi0} that $$ \mathcal{J}_T(x,u) = J_n^{W_T}\pi_0(x,u) = J_n^{\nor(C)}\pi_0(x,u) \quad \textrm{for $\mathcal{H}^n $ a.e.\ $(x,u)\in \nor(C) $} $$ and $ H_{T,k-1}(x,u) \geq 0 $ for $ \mathcal{H}^n $ a.e.\ $(x,u)\in W_T $ by Lemma \ref{lem asymptotically mean convexity}, we employ  Lemma \ref{lem: Minkowski formulae}, \eqref{main: lower bound}, \eqref{main conclusion1} and area formula to conclude
\begin{flalign}\label{main MinkowskiII}
& (n-k+1)\int \mathcal{J}_T(x,u)\, i_T(x,u)\, H_{T,k-1}(x,u)\, d\mathcal{H}^n(x,u)\\
& \qquad  \geq (n-k+1)\int_{W_T^{(n)}} \mathcal{J}_T(x,u)\, i_T(x,u)\, H_{T,k-1}(x,u)\, d\mathcal{H}^n(x,u)\notag \\
& \qquad \geq  (n-k+1)\,\lambda^{\frac{k-1}{k}}\,{n \choose k}^{\frac{1-k}{k}}{n \choose k-1}\, \int_{W_T^{(n)}} \mathcal{J}_T(x,u)\, i_T(x,u)\, d\mathcal{H}^n(x,u)\notag \\
& \qquad  \geq  (n-k+1)\,\lambda^{\frac{k-1}{k}}\,{n \choose k}^{\frac{1-k}{k}}{n \choose k-1}\, \int_{\nor(C)} J_n^{\nor(C)}\pi_0(x,u)\, i_T(x,u)\, d\mathcal{H}^n(x,u)\notag \\
& \qquad \geq (n-k+1)\,\lambda^{\frac{k-1}{k}}\,{n \choose k}^{\frac{1-k}{k}}{n \choose k-1}\,\mathcal{H}^n(\partial^\ast C). \notag 
\end{flalign}
Combining \eqref{main: minkowskiI} and \eqref{main MinkowskiII} we conclude
$$\Bigg( \frac{\lambda}{{n \choose k}}\Bigg)^{\frac{1}{k}} \geq \frac{\mathcal{H}^n(\partial^\ast C)}{(n+1)\mathcal{L}^{n+1}(C)}, $$
whence we infer in combination with \eqref{main: lower bound} and \eqref{main conclusion2} that 
\begin{equation}
\bm{h}(V_C,x) \bullet u \geq \frac{n\mathcal{H}^n(\partial^\ast C)}{(n+1)\mathcal{L}^{n+1}(C)} \quad \textrm{for $ \mathcal{H}^n $ a.e.\ $(x,u)\in \nor(C) $.}
\end{equation}
We conclude from Corollary \ref{HK corollary} that $ C $ is the union of finitely many closed balls of the same radius $  \rho = \frac{(n+1)\mathcal{L}^{n+1}(E)}{\mathcal{H}^n(\partial^\ast E)}  $ with disjointed interiors. 

We now use Lemma \ref{lem: Santilli20II} with $ \Sigma $ replaced by a sphere of radius $ \rho $: noting that $ \nor(C) \subseteq \nor(\partial C) $, this lemma allows to conclude in combination with \eqref{lem varifolds conclusion 2} of Lemma \ref{lem varifolds},  that $$ \kappa_{C,1}(x,u) = \ldots = \kappa_{C,n}(x,u) = \frac{1}{\rho} \quad \textrm{for $ \mathcal{H}^n $ a.e.\ $(x,u)\in \nor(C) $}. $$ 
Noting that $ \kappa_{T,i}(x,u) = \kappa_{C,i}(x,u) $ for $ \mathcal{H}^n $ a.e.\ $(x,u)\in  \nor(C) $ and for $ i = 1, \ldots, n $, as proved in \eqref{lem 3 eq 6}, we conclude from \eqref{main H_k 2} that
$$ \lambda = H_{T,k}(x,u) = H_{C,k}(x,u) = {n \choose k}\rho^{-k} $$
for $ \mathcal{H}^n $ a.e.\ $(x,u)\in \nor(C) $. Since $ \mathcal{H}^n(\nor(C)) > 0 $ we finally conclude that
$\lambda = {n \choose k}\rho^{-k} $.

\appendix

\section{Soap bubbles with positive reach}\label{section appendix}

In this appendix we first generalize \cite[Theorem A]{HugSantilli}, removing the restriction  $ \lambda \in \mathbb{R} \setminus \{0\} $. Notice that in  \cite[Theorem A]{HugSantilli} a stronger hypothesis is at play than in \cite[Theorem 6.15]{HugSantilli} and  I do not know if the restriction $ \lambda \in \mathbb{R} \setminus \{0\} $ can also be removed in \cite[Theorem 6.15]{HugSantilli}. We use the notation and the terminology of \cite{HugSantilli}, for which we refer to \cite[Section 2.2]{HugSantilli}.

\begin{theorem}\label{soap bubble th positive reach}
Let $ \phi $ be a uniformly convex $ C^2 $-norm, $ k \in \{1, \ldots , n\} $ and let $ C \subseteq \mathbb{R}^{n+1} $ be a set of positive reach with positive and finite volume. Assume that 
\begin{equation}\label{soap bubble th positive reach: 1}
\textrm{$\Theta^\phi_{n-i}(C, \cdot) $ is a non-negative measure for $ i =1, \ldots , k-1 $}
\end{equation}
and 
\begin{equation}\label{soap bubble th positive reach: 2}
\Theta^\phi_{n-k}(C, \cdot) = \lambda \Theta^\phi_n(C, \cdot) \quad \textrm{for some $ \lambda \in \mathbb{R} $.}
\end{equation}
Then $ C $ is a finite union of disjoint union of rescaled and translated Wulff shapes of radius $ \rho = \frac{(n+1)\mathcal{L}^{n+1}(C)}{\mathcal{P}^\phi(C)} $ and $ \lambda = {n \choose r} \frac{\rho}{r+1} $.
\end{theorem}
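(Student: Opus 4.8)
The plan is to reduce Theorem \ref{soap bubble th positive reach} to \cite[Theorem A]{HugSantilli}. That result derives exactly the asserted conclusion --- $C$ is a finite disjoint union of rescaled and translated Wulff shapes of radius $\rho = (n+1)\mathcal{L}^{n+1}(C)/\mathcal{P}^\phi(C)$, with the stated value of $\lambda$ --- from the hypotheses \eqref{soap bubble th positive reach: 1}--\eqref{soap bubble th positive reach: 2}, \emph{except} that it additionally requires $\lambda \neq 0$. Hence the whole point is to show that, when $C$ has positive and finite volume and \eqref{soap bubble th positive reach: 1}--\eqref{soap bubble th positive reach: 2} hold, one automatically has $\lambda > 0$; then \cite[Theorem A]{HugSantilli} applies verbatim and there is nothing more to do.

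To prove $\lambda > 0$ I would mimic the ``$\lambda>0$'' step in the proof of Theorem \ref{main} (Section \ref{section main}), now working with the normal cycle of the positive-reach set $C$ and the associated anisotropic curvature measures $\Theta^\phi_j(C,\cdot)$, whose construction and basic properties are provided by \cite{HugSantilli} (cf.\ also \cite[Section 3]{Fu98}). The two facts I need from that theory are: (i) the anisotropic Minkowski--Hsiung identities, which for each $j \in \{1,\ldots,n\}$ express the total mass of $\Theta^\phi_{n-j+1}(C,\cdot)$ as a positive multiple of a weighted integral of $\Theta^\phi_{n-j}(C,\cdot)$ against the natural $1$-homogeneous weight $\sigma_\phi$ on the anisotropic normal bundle (the analogue of $(x,u)\mapsto x\bullet u$); and (ii) the anisotropic divergence identity expressing $(n+1)\mathcal{L}^{n+1}(C)$ as the integral of $\sigma_\phi$ against $\Theta^\phi_n(C,\cdot)$. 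Feeding \eqref{soap bubble th positive reach: 2} into identity (i) with $j=k$ and using (ii), one obtains that the total mass of $\Theta^\phi_{n-k+1}(C,\cdot)$ equals a positive multiple of $\lambda\,\mathcal{L}^{n+1}(C)$; the left-hand side is $\geq 0$ (by \eqref{soap bubble th positive reach: 1} when $k\geq 2$, and trivially when $k=1$, as $\Theta^\phi_n(C,\cdot)$ is non-negative) and $\mathcal{L}^{n+1}(C)>0$, so $\lambda \geq 0$.

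It then remains to exclude $\lambda=0$, which is the only genuinely new ingredient. Suppose $\lambda=0$; then \eqref{soap bubble th positive reach: 2} gives $\Theta^\phi_{n-k}(C,\cdot)=0$, and I would run a finite downward induction: if $\Theta^\phi_{n-j}(C,\cdot)=0$ for some $j \in \{1,\ldots,k\}$, then the $j$-th Minkowski identity in (i) forces the total mass of $\Theta^\phi_{n-j+1}(C,\cdot)$ to vanish, and since $\Theta^\phi_{n-j+1}(C,\cdot)$ is a non-negative measure (by \eqref{soap bubble th positive reach: 1} when $1\leq j-1\leq k-1$, and trivially when $j=1$) this measure is identically zero. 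Descending from $j=k$ to $j=1$ yields $\Theta^\phi_n(C,\cdot)=0$, i.e.\ $\mathcal{P}^\phi(C)=0$, whence $\mathcal{L}^{n+1}(C)=0$ by (ii) --- contradicting the hypothesis. Therefore $\lambda>0$ and \cite[Theorem A]{HugSantilli} finishes the proof, the explicit values of $\rho$ and $\lambda$ being those supplied by that theorem (equivalently, obtained by evaluating $\Theta^\phi_{n-k}(C,\cdot)$ and $\Theta^\phi_n(C,\cdot)$ on a single rescaled Wulff shape of radius $\rho$). The main obstacle is organizational rather than conceptual: checking that the anisotropic Minkowski--Hsiung and divergence identities hold for positive-reach sets in precisely the form and normalization used in \cite{HugSantilli}; no new analytic difficulty arises beyond those already met in Section \ref{section main}.
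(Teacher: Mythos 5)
Your proof is correct and takes essentially the same route as the paper: reduce to \cite[Theorem A]{HugSantilli}, then rule out $\lambda = 0$ by a downward induction through the anisotropic Minkowski--Hsiung identities, arriving at $\mathcal{L}^{n+1}(C)=0$ and hence a contradiction. The only cosmetic differences are that the paper phrases the induction in terms of the a.e.-defined anisotropic mean curvature functions $\bm{H}^\phi_{C,j}$ on the normal bundle (invoking \cite[Lemma 6.14 and Theorem 6.8]{HugSantilli}) rather than directly in terms of the curvature measures $\Theta^\phi_j(C,\cdot)$ --- which are equivalent via the representation formula --- and that your preliminary step establishing $\lambda\ge 0$, while sound, is not strictly needed since \cite[Theorem A]{HugSantilli} requires only $\lambda\ne 0$.
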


\begin{proof}
The main point is to prove that $ \lambda > 0 $, then the conclusion can be deduced from \cite[Theorem A]{HugSantilli}. The proof of the positivity of $ \lambda $ follows closely the idea used in the proof of Theorem \ref{main}.

We notice from \cite[Lemma 6.14]{HugSantilli} that
	\begin{equation}\label{soap bubble th positive reach: eq1}
\bm{H}^\phi_{C,r}(a, \eta) =0 \quad \textrm{for $ \mathcal{H}^n $ a.e.\ $(a, \eta)\in N^\phi(C) \setminus \widetilde{N}^\phi_n(C) $} 
	\end{equation}
and
\begin{equation}\label{soap bubble th positive reach: eq2}
	\bm{H}^\phi_{C,r}(a, \eta) = (r+1)\lambda \quad \textrm{for $ \mathcal{H}^n $ a.e.\ $(a, \eta)\in  \widetilde{N}^\phi_n(C) $.}
\end{equation}
Assume by contradiction that $ \lambda =0 $. Now we prove by induction that 
\begin{equation}\label{soap bubble th positive reach: lambda}
	\bm{H}^\phi_{C,j}(a, \eta) =0 \quad \textrm{for $ \mathcal{H}^n $ a.e.\ $ (x,u)\in N^\phi(C) $}
\end{equation}  
for $ j =0, \ldots , r $. If $ j = r $ then \eqref{soap bubble th positive reach: lambda} follows from \eqref{soap bubble th positive reach: eq1} and  \eqref{soap bubble th positive reach: eq2}.  If $ i \in \{1, \ldots , r\} $ and $ \bm{H}^\phi_{C,r-i+1}(a, \eta) =0$ for $ \mathcal{H}^n $ a.e.\ $(a, \eta)\in N^\phi(C) $, we infer from Minkowski formula in \cite[Theorem 6.8]{HugSantilli}
\begin{flalign*}
	&0 =(r-i+1)\int_{N^\phi(C)} (a\bullet \bm{n}^\phi(\eta))\, J^\phi_C(a, \eta) \, \bm{H}^\phi_{C,r-i+1}(a, \eta)\, d\mathcal{H}^n(a, \eta)\\
	& \qquad  = (n-r+i)\int_{N^\phi(C)}\phi(\bm{n}^\phi(\eta))\, J^\phi_C(a,\eta)\,\bm{H}^\phi_{C,r-i}(a, \eta)\, d\mathcal{H}^n(a,\eta).
\end{flalign*}  
Since it follows from \eqref{soap bubble th positive reach: 1} and \cite[Definition 6.2]{HugSantilli} that $$ \bm{H}^\phi_{C,r-i}(a, \eta) \geq 0 \quad \textrm{and} \quad  \phi(\bm{n}^\phi(\eta)) J^\phi_C(a, \eta) > 0 \quad \textrm{for $ \mathcal{H}^n $ a.e.\ $(a,\eta)\in N^\phi(C) $,}$$ we conclude that 
$$ \bm{H}^\phi_{C,r-i}(a, \eta) =0 \quad \textrm{for $ \mathcal{H}^n $ a.e.\ $(x,u)\in N^\phi(C) $.} $$
Henceforth, \eqref{soap bubble th positive reach: lambda} is proved for all $ j \in \{0, \ldots , r\} $. It follows that  $ \bm{H}^\phi_{C,0}(a, \eta) =0 $ for $ \mathcal{H}^n $ a.e.\ $(a, \eta)\in N^\phi(C) $ and $ \mathcal{L}^{n+1}(C) =0 $ by the second formula in \cite[Theorem 6.8]{HugSantilli}. This a contradiction, and consequently we conclude  that $ \lambda $ is positive. 

\end{proof}

Combining Theorem \ref{soap bubble th positive reach} with some standard facts on sets of positive reach we can prove the following uniqueness results for $ C^2 $-boundaries with $ k $-th mean curvature functions converging in $ L^1 $ to a constant. A completely analogous statement can be obtained in the anisotropic geometry induced by a uniformly convex $ C^2 $-norm. We choose to state Theorem \ref{positive reach} in the Euclidean setting to facilitate the comparison with Theorem \ref{main}.

\begin{theorem}\label{positive reach}
	Let $ k \in \{1, \ldots , n\} $, let $ \Omega_\ell $ be a compactly supported  and asymptotically $(k-1)$-mean convex sequence and let $ C_\ell $ be the closure of $ \Omega_\ell $. Suppose there exists $ \lambda \in \mathbb{R}$ such that 
	\begin{equation}\label{positive reach: hp}
		\lim_{\ell \to \infty}\int_{\partial \Omega_\ell}| H_{\Omega_\ell, k} -\lambda | \, d\mathcal{H}^n =0
	\end{equation}
	and there exists $\epsilon > 0 $ so that $ \reach(C_\ell)\geq \epsilon $ for all $ \ell \geq 1 $.
	
	If $ C \subseteq \mathbb{R}^{n+1} $  is an accumulation point with respect to Hausdorff convergence of $C_\ell  $ with positive volume, then $ C $ is a closed ball.
\end{theorem}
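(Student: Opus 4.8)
The plan is to pass to a subsequence realizing the Hausdorff convergence $C_\ell\to C$, to promote $C$ to a set of positive reach, to transport the hypotheses of Theorem \ref{positive reach} onto the (Euclidean) curvature measures of $C$, and then to apply Theorem \ref{soap bubble th positive reach} together with a connectedness argument. First I would use that, since the $\Omega_\ell$ are compactly supported, all the $C_\ell$ lie in a fixed ball $B_R$; the class of compact subsets of $B_R$ with reach $\geq\epsilon$ is closed for the Hausdorff distance (see e.g.\ \cite{RatajZaehlebook}), so $\reach(C)\geq\epsilon>0$, and since $\mathcal{L}^{n+1}(C)>0$ the set $C$ has positive and finite volume. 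Thus Theorem \ref{soap bubble th positive reach}, applied with $\phi$ the Euclidean norm, will give the structure of $C$ once its hypotheses \eqref{soap bubble th positive reach: 1} and \eqref{soap bubble th positive reach: 2} are verified.

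The core of the argument is the continuity of curvature measures along the sequence: for sets of reach $\geq\epsilon$ contained in $B_R$, Hausdorff convergence implies weak convergence of the Euclidean curvature measures, i.e.\ $\Theta_{n-i}(C_\ell,\cdot)\to\Theta_{n-i}(C,\cdot)$ as Radon measures for every $i\in\{0,\ldots,n\}$ (classical; see \cite{Fed69, RatajZaehlebook}, and \cite[Section~2.2]{HugSantilli} for the normalisation to be used). Since each $C_\ell=\overline{\Omega_\ell}$ is a $C^2$-domain, the classical tube formula identifies these measures explicitly: there are dimensional constants $c_{n,i}>0$ with $\Theta_{n-i}(C_\ell,A)=c_{n,i}\int_{A\cap\partial\Omega_\ell}H_{\Omega_\ell,i}\,d\mathcal{H}^n$ for $0\leq i\leq n$, where $H_{\Omega_\ell,0}:=1$ (so that $\Theta_n(C_\ell,\cdot)=c_{n,0}\,\mathcal{H}^n\restrict\partial\Omega_\ell$).

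Next I would transport the two hypotheses. By asymptotic $(k-1)$-mean convexity one has $\int_{\partial\Omega_\ell}(H_{\Omega_\ell,i})^-\,d\mathcal{H}^n\to 0$ for $i=1,\ldots,k-1$; testing the weak convergence $\Theta_{n-i}(C_\ell,\cdot)\to\Theta_{n-i}(C,\cdot)$ against non-negative continuous functions then shows that $\Theta_{n-i}(C,\cdot)$ is a non-negative measure for $i=1,\ldots,k-1$, which is \eqref{soap bubble th positive reach: 1}. For \eqref{soap bubble th positive reach: 2}, hypothesis \eqref{positive reach: hp} means $H_{\Omega_\ell,k}\,\mathcal{H}^n\restrict\partial\Omega_\ell-\lambda\,\mathcal{H}^n\restrict\partial\Omega_\ell\to 0$ in total variation; since $c_{n,k}^{-1}\Theta_{n-k}(C_\ell,\cdot)=H_{\Omega_\ell,k}\,\mathcal{H}^n\restrict\partial\Omega_\ell$ and $c_{n,0}^{-1}\Theta_n(C_\ell,\cdot)=\mathcal{H}^n\restrict\partial\Omega_\ell$ both converge weakly, passing to the limit gives $c_{n,k}^{-1}\Theta_{n-k}(C,\cdot)=\lambda\,c_{n,0}^{-1}\Theta_n(C,\cdot)$, i.e.\ $\Theta_{n-k}(C,\cdot)=(\lambda c_{n,k}/c_{n,0})\,\Theta_n(C,\cdot)$, which is \eqref{soap bubble th positive reach: 2} with this constant. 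Theorem \ref{soap bubble th positive reach} then yields that $C$ is a finite union of closed balls of equal radius with pairwise disjoint interiors. To upgrade this to a single ball I would invoke connectedness: since the $\Omega_\ell$ are connected, so are the $C_\ell$, hence so is the Hausdorff limit $C$; and a connected finite union of closed balls with disjoint interiors is either a single ball or contains two mutually tangent balls, the latter forcing $\reach(C)=0$, in contradiction with $\reach(C)\geq\epsilon$. Hence $C$ is a closed ball.

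I expect the main obstacle to be the continuity-of-curvature-measures step, together with its combination with the classical tube formula, since this is the point at which all four inputs of Theorem \ref{positive reach} — the uniform reach bound, the Hausdorff convergence, the $L^1$-almost-constancy of $H_{\Omega_\ell,k}$, and the asymptotic $(k-1)$-mean convexity — are used simultaneously, and it is what converts the analytic hypotheses into the geometric conditions required by Theorem \ref{soap bubble th positive reach}. By contrast, the closedness of the reach-$\geq\epsilon$ class under Hausdorff limits and the final connectedness argument are soft.
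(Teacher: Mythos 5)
Your proposal follows essentially the same route as the paper: pass to a Hausdorff-convergent subsequence, note that uniform positive reach passes to the limit (the paper cites \cite[Lemma 6.12]{HugSantilli} where you cite \cite{RatajZaehlebook}), use weak continuity of the curvature measures of sets of uniform positive reach under Hausdorff convergence, identify $\Theta_{n-i}(C_\ell,\cdot)$ via the tube formula with integrals of $H_{\Omega_\ell,i}$ over $\partial\Omega_\ell$ (the paper's \cite[Lemma 6.4]{HugSantilli}, which makes your $c_{n,i}=1/(i+1)$ explicit and keeps the measures on the normal bundle $\mathbb{R}^{n+1}\times\mathbb{R}^{n+1}$ rather than on $\mathbb{R}^{n+1}$), transport non-negativity of $\Theta_{n-i}(C,\cdot)$ for $1\le i\le k-1$ by pushing the negative parts to zero, transport the relation $\Theta_{n-k}(C,\cdot)=\lambda\,\Theta_n(C,\cdot)$ from the $L^1$-almost-constancy hypothesis, and apply Theorem \ref{soap bubble th positive reach}.

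The one place where you diverge is the final connectedness step. You invoke connectedness of the $\Omega_\ell$ (hence of $C$ as a Hausdorff limit of compact connected sets) plus positive reach to rule out more than one component; note however that connectedness of $\Omega_\ell$ is \emph{not} among the hypotheses of Theorem \ref{positive reach} as written, so strictly speaking this step introduces an unstated assumption. Interestingly, the paper's own proof simply writes ``the conclusion follows from Theorem \ref{soap bubble th positive reach},'' which only yields a finite disjoint union of balls of equal radius, and does not explain the passage to a single ball either; so the gap you are filling is actually present in the paper, and your fix is the natural one (whereas in the paper either the conclusion should read ``finite union of disjoint closed balls,'' or a connectedness hypothesis should be added). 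A small correction to your own argument: Theorem \ref{soap bubble th positive reach} already delivers pairwise \emph{disjoint} balls (positive reach forbids tangency), so the case of mutually tangent balls that you dispose of via $\reach(C)=0$ never arises; what connectedness rules out is simply two or more strictly separated components.
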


\begin{proof}
It follows from \cite[Lemma 6.12]{HugSantilli} that $ \reach(C) \geq \epsilon $. Moreover, the $ j $-th curvature measures $ \Theta_j(C_\ell, \cdot) $ associated with $ C_\ell $ converge weakly to $ \Theta_j(C, \cdot) $ for every $ j \in \{0, \ldots ,n\} $. Recalling that $ H_{\Omega_\ell, m} $ is the $ m $-th mean curvature function of $ \partial \Omega_\ell $ with respect to the outward-pointing normal  direction, and $ H_{\Omega_\ell, 0} \equiv 1 $, we notice from \cite[Lemma 6.4]{HugSantilli} that
\begin{flalign*}
(n-j+1)\Theta_j(C_\ell, B) = \int_{\partial \Omega_\ell} \bm{1}_B(a, \nu_{\Omega_\ell}(a))\, H_{\Omega_\ell, n-j}(a)\, d\mathcal{H}^n(a, \eta)
\end{flalign*}  
for every Borel set $ B \subseteq \mathbb{R}^{n+1}\times \mathbb{R}^{n+1} $ and $ j = 0, \ldots , n $. We consider the negative and positive part of $ \Theta_j(C_\ell, \cdot) $
$$ \Theta^{\pm}_j(C_\ell, \cdot) = \int_{\partial \Omega_\ell} \bm{1}_{(\cdot)}(a, \nu_{\Omega_\ell}(a))\, H_{\Omega_\ell, n-j}(a)^{\pm}\, d\mathcal{H}^n(a, \eta) \quad \textrm{for $ j = 1, \ldots, n $}, $$
which are non-negative Radon measures.  Noting that
$ \Theta^-_{n-i}(C_\ell, \cdot) $ weakly converge to $ 0 $ for $ i \in \{1, \ldots , k-1\} $, we conclude that $ \Theta^+_{n-i}(C_\ell, \cdot) = \Theta_{n-i}(C_\ell, \cdot) + \Theta^-_{n-i}(C_\ell, \cdot) $ weakly converge to $ \Theta_{n-i}(C, \cdot) $. Henceforth,  $ \Theta_{n-i}(C, \cdot)  $ is a non-negative Radon measure for $ i \in \{1, \ldots , k-1\} $.
Moreover, noting that 
$$ (k+1)\Theta_{n-k}(C_\ell, B) = \int_{\partial \Omega_\ell} \bm{1}_B(a, \nu_{\Omega_\ell}(a))\, (H_{\Omega_\ell,k} - \lambda)\, d\mathcal{H}^n(a) + \lambda \Theta_n(C_\ell, B) $$
for every Borel set $ B \subseteq \mathbb{R}^{n+1}\times \mathbb{R}^{n+1} $, we conclude from \eqref{positive reach: hp} that $ \Theta_{n-k}(C, \cdot) = \lambda \Theta_n(C, \cdot) $. Now the conclusion follows from Theorem \ref{soap bubble th positive reach}.
\end{proof}


\begin{thebibliography}{DMMN18}
	
	\bibitem[AFP00]{AFP00}
	Luigi Ambrosio, Nicola Fusco, and Diego Pallara.
	\newblock {\em Functions of bounded variation and free discontinuity problems}.
	\newblock Oxford Mathematical Monographs. The Clarendon Press, Oxford
	University Press, New York, 2000.
	
	\bibitem[Ale58]{Aleksandrov}
	A.~D. Aleksandrov.
	\newblock Uniqueness theorems for surfaces in the large. {V}.
	\newblock {\em Vestnik Leningrad. Univ.}, (no. 19):5--8, 1958.
	
	\bibitem[All72]{Allard72}
	William~K. Allard.
	\newblock On the first variation of a varifold.
	\newblock {\em Ann. of Math. (2)}, 95:417--491, 1972.
	
	\bibitem[CM17]{CiraoloMaggi}
	Giulio Ciraolo and Francesco Maggi.
	\newblock On the shape of compact hypersurfaces with almost-constant mean
	curvature.
	\newblock {\em Comm. Pure Appl. Math.}, 70(4):665--716, 2017.
	
	\bibitem[CNS85]{CaffarelliNirenberSpruck85}
	L.~Caffarelli, L.~Nirenberg, and J.~Spruck.
	\newblock The {D}irichlet problem for nonlinear second-order elliptic
	equations. {III}. {F}unctions of the eigenvalues of the {H}essian.
	\newblock {\em Acta Math.}, 155(3-4):261--301, 1985.
	
	\bibitem[CRV21]{CiraoloRoncoroniVezzoni}
	Giulio Ciraolo, Alberto Roncoroni, and Luigi Vezzoni.
	\newblock Quantitative stability for hypersurfaces with almost constant
	curvature in space forms.
	\newblock {\em Ann. Mat. Pura Appl. (4)}, 200(5):2043--2083, 2021.
	
	\bibitem[CV18]{CiraoloVezzoni}
	Giulio Ciraolo and Luigi Vezzoni.
	\newblock A sharp quantitative version of {A}lexandrov's theorem via the method
	of moving planes.
	\newblock {\em J. Eur. Math. Soc. (JEMS)}, 20(2):261--299, 2018.
	
	\bibitem[CW13]{ChangWang2014}
	Sun-Yung~Alice Chang and Yi~Wang.
	\newblock Inequalities for quermassintegrals on {$k$}-convex domains.
	\newblock {\em Adv. Math.}, 248:335--377, 2013.
	
	\bibitem[DM19]{MaggiDelgadino}
	Matias~Gonzalo Delgadino and Francesco Maggi.
	\newblock Alexandrov's theorem revisited.
	\newblock {\em Anal. PDE}, 12(6):1613--1642, 2019.
	
	\bibitem[DMMN18]{MaggiArma2018}
	Matias~G. Delgadino, Francesco Maggi, Cornelia Mihaila, and Robin Neumayer.
	\newblock Bubbling with {$L^2$}-almost constant mean curvature and an
	{A}lexandrov-type theorem for crystals.
	\newblock {\em Arch. Ration. Mech. Anal.}, 230(3):1131--1177, 2018.
	
	\bibitem[DRKS20]{DeRosaetall}
	Antonio De~Rosa, Slawomir Kolasinski, and Mario Santilli.
	\newblock Uniqueness of critical points of the anisotropic isoperimetric
	problem for finite perimeter sets.
	\newblock {\em Arch. Ration. Mech. Anal.}, 238(3):1157--1198, 2020.
	
	\bibitem[Dug86]{Duggan86}
	J.~P. Duggan.
	\newblock {$W^{2,p}$} regularity for varifolds with mean curvature.
	\newblock {\em Comm. Partial Differential Equations}, 11(9):903--926, 1986.
	
	\bibitem[Fed69]{Fed69}
	Herbert Federer.
	\newblock {\em Geometric measure theory}.
	\newblock Die Grundlehren der mathematischen Wissenschaften, Band 153.
	Springer-Verlag New York, Inc., New York, 1969.
	
	\bibitem[Fu98]{Fu98}
	Joseph H.~G. Fu.
	\newblock Some remarks on {L}egendrian rectifiable currents.
	\newblock {\em Manuscripta Math.}, 97(2):175--187, 1998.
	
	\bibitem[GL09]{GuanLi2009}
	Pengfei Guan and Junfang Li.
	\newblock The quermassintegral inequalities for {$k$}-convex starshaped
	domains.
	\newblock {\em Adv. Math.}, 221(5):1725--1732, 2009.
	
	\bibitem[GLL12]{GuanLiLi}
	Pengfei Guan, Junfang Li, and Yanyan Li.
	\newblock Hypersurfaces of prescribed curvature measure.
	\newblock {\em Duke Math. J.}, 161(10):1927--1942, 2012.
	
	\bibitem[HK78]{HeintzeKarcher}
	Ernst Heintze and Hermann Karcher.
	\newblock A general comparison theorem with applications to volume estimates
	for submanifolds.
	\newblock {\em Ann. Sci. \'{E}cole Norm. Sup. (4)}, 11(4):451--470, 1978.
	
	\bibitem[HS22]{HugSantilli}
	Daniel Hug and Mario Santilli.
	\newblock Curvature measures and soap bubbles beyond convexity.
	\newblock {\em Adv. Math.}, 411(part A):Paper No. 108802, 89, 2022.
	
	\bibitem[Hsi54]{Hsiung}
	Chuan-Chih Hsiung.
	\newblock Some integral formulas for closed hypersurfaces.
	\newblock {\em Math. Scand.}, 2:286--294, 1954.
	
	\bibitem[JN23]{JulinNinikoski}
	Vesa Julin and Joonas Niinikoski.
	\newblock Quantitative {A}lexandrov theorem and asymptotic behavior of the
	volume preserving mean curvature flow.
	\newblock {\em Anal. PDE}, 16(3):679--710, 2023.
	
	\bibitem[Mag12]{Maggibook}
	Francesco Maggi.
	\newblock {\em Sets of finite perimeter and geometric variational problems},
	volume 135 of {\em Cambridge Studies in Advanced Mathematics}.
	\newblock Cambridge University Press, Cambridge, 2012.
	\newblock An introduction to geometric measure theory.
	
	\bibitem[Men13]{Menne13}
	Ulrich Menne.
	\newblock Second order rectifiability of integral varifolds of locally bounded
	first variation.
	\newblock {\em J. Geom. Anal.}, 23(2):709--763, 2013.
	
	\bibitem[MP20]{MagnaniniPoggesi}
	Rolando Magnanini and Giorgio Poggesi.
	\newblock Nearly optimal stability for {S}errin's problem and the soap bubble
	theorem.
	\newblock {\em Calc. Var. Partial Differential Equations}, 59(1):Paper No. 35,
	23, 2020.
	
	\bibitem[MR91]{MontielRos}
	Sebasti\'{a}n Montiel and Antonio Ros.
	\newblock Compact hypersurfaces: the {A}lexandrov theorem for higher order mean
	curvatures.
	\newblock In {\em Differential geometry}, volume~52 of {\em Pitman Monogr.
		Surveys Pure Appl. Math.}, pages 279--296. Longman Sci. Tech., Harlow, 1991.
	
	\bibitem[MS23]{MaggiSantilli}
	Francesco Maggi and Mario Santilli.
	\newblock Rigidity and compactness with constant mean curvature in warped
	product manifolds, 2023.
	
	\bibitem[Res68]{Reshetnyakdifferentiability}
	Ju.~G. Reshetnyak.
	\newblock Generalized derivatives and differentiability almost everywhere.
	\newblock {\em Mat. Sb. (N.S.)}, pages 323--334, 1968.
	
	\bibitem[Ros87]{RosRevista}
	Antonio Ros.
	\newblock Compact hypersurfaces with constant higher order mean curvatures.
	\newblock {\em Rev. Mat. Iberoamericana}, 3(3-4):447--453, 1987.
	
	\bibitem[Ros88]{KorevaarRos}
	Antonio Ros.
	\newblock Compact hypersurfaces with constant scalar curvature and a congruence
	theorem.
	\newblock {\em J. Differential Geom.}, 27(2):215--223, 1988.
	\newblock With an appendix by Nicholas J. Korevaar.
	
	\bibitem[RZ19]{RatajZaehlebook}
	Jan Rataj and Martina Z\"{a}hle.
	\newblock {\em Curvature measures of singular sets}.
	\newblock Springer Monographs in Mathematics. Springer, Cham, 2019.
	
	\bibitem[San20a]{SantilliAnnali}
	Mario Santilli.
	\newblock Fine properties of the curvature of arbitrary closed sets.
	\newblock {\em Ann. Mat. Pura Appl. (4)}, 199(4):1431--1456, 2020.
	
	\bibitem[San20b]{SantilliBulletin}
	Mario Santilli.
	\newblock Normal bundle and {A}lmgren's geometric inequality for singular
	varieties of bounded mean curvature.
	\newblock {\em Bull. Math. Sci.}, 10(1):2050008, 24, 2020.
	
	\bibitem[Sim83]{Simonbook}
	Leon Simon.
	\newblock {\em Lectures on geometric measure theory}, volume~3 of {\em
		Proceedings of the Centre for Mathematical Analysis, Australian National
		University}.
	\newblock Australian National University, Centre for Mathematical Analysis,
	Canberra, 1983.
	
\end{thebibliography}
\end{document}